\newtheorem{theorem}{Theorem}[section]
\newtheorem{lemma}[theorem]{Lemma}
\newtheorem{corollary}[theorem]{Corollary}
\newtheorem{question}[theorem]{Question}
\newtheorem{example}[theorem]{Example}
\theoremstyle{definition}
\theoremstyle{remark}
\numberwithin{equation}{section}
\newcommand{ \R } { \mathbb{R} }
\newcommand{ \N } { \mathbb{N} }
\newcommand{\Q} {\mathbb{Q}}
\newcommand{\script}{\mathcal}
\newcommand{\parentheses}[1]{{\left( {#1} \right)}}
\newcommand{\p}{\parentheses}
\newcommand{\of}{\parentheses}
\newcommand{\singletonDeletion}[1]{\script{D}\parentheses{#1}}
\newcommand{\multideck}[1]{\script{D}'\parentheses{#1}}
\newcommand{\closure}[1]{\overline{#1}}
\newcommand{\closureIn}[2]{\closure{#2}^{#1}}
\newcommand{\interior}[1]{\mathrm{int}\of{#1}}
\newcommand{\Set}[1]{{\left\lbrace {#1} \right\rbrace}}
\newcommand{\singleton}{\Set}
\newcommand{\union}{\cup}
\newcommand{\intersect}{\cap}
\newcommand{\Union}{\bigcup}
\newcommand{\disjointSum}{\oplus}
\newcommand{\cardinality}[1]{{\left\lvert {#1} \right\rvert}}
\def\set#1:#2{\Set{{#1} \colon {#2}}}
\newcommand{\discrete}[1]{\mathfrak{D}_{{#1}}}
\newcommand{\Cech}{\v{Cech}}
\begin{document}
\title{A Topological Variation of the Reconstruction Conjecture}
\author{Max F. Pitz}
\address{Mathematical Institute\\University of Oxford\\Oxford OX2 6GG\\United Kingdom}
\email[Corresponding author]{pitz@maths.ox.ac.uk}
\author{Rolf Suabedissen}
\address{Mathematical Institute\\University of Oxford\\Oxford OX2 6GG\\United Kingdom}
\email{suabedis@maths.ox.ac.uk}

\subjclass[2010]{Primary 54B99; Secondary 05C60, 54B05, 54A05, 54Dxx, 54E35}
\keywords{Reconstruction conjecture, topological reconstruction, singleton deletions, subspace reconstruction}


\begin{abstract}
This paper investigates topological reconstruction, related to the reconstruction conjecture in graph theory. We ask whether the homeomorphism types of subspaces of a space $X$ which are obtained by deleting singletons determine $X$ uniquely up to homeomorphism. If the question can be answered affirmatively, such a space is called reconstructible.	

We prove that in various cases topological properties can be reconstructed. As main result we find that familiar spaces such as 
the reals $\R$, the rationals $\Q$ and the irrationals $P$ are reconstructible, as well as spaces occurring as Stone-\v{C}ech compactifications. Moreover, some non-reconstructible spaces are discovered, amongst them the Cantor set $C$.
\end{abstract}

\maketitle

\section{Introduction}
In 1941, S.M. Ulam and P.J. Kelly proposed a conjecture which has been known since then as the \emph{reconstruction conjecture}. Roughly speaking, the reconstruction conjecture asks whether every finite graph is uniquely determined by the structure of subgraphs which are obtained by deleting a single vertex and all incident edges. Information about the reconstruction conjecture can be found in the survey by J.A. Bondy, \cite{Bondy}. The conjecture remains unsolved -- and is considered as one of the most challenging problems in graph theory.

In this paper we describe a topological version of the reconstruction problem: when do certain subspaces of topological spaces determine the space uniquely up to homeomorphism? Embracing terminology from graph theory, we say that a topological space $Y$ is a \emph{card} of another space $X$ if $Y$ is homeomorphic to $X \setminus \singleton{x}$ for some $x$ in $X$. The \emph{deck} of a space $X$ is a transversal for the non-homeomorphic cards of $X$, i.e.\ an object recording the topologically distinct subspaces one can obtain by deleting singletons from $X$.

Formally, for a space $X$ we denote by $[X]_\sim$ the homeomorphism type of $X$. The deck of $X$ can then be defined as the set $\singletonDeletion{X}=\set{[X \setminus \singleton{x}]_\sim}:{x \in X}$, and cards of $X$ correspond to elements of the deck of $X$. Note that we deliberately identify a card, which is a concrete topological space $Y$, with the class of spaces homeomorphic to $Y$. In practice this identification never causes confusion, and, as in the following examples, we will simply state which cards occur.

If $Y$ is a card of the real line $\R$, then $Y$ is homeomorphic to two copies of the real line. We write this as $\singletonDeletion{\R}=\Set{\R \oplus \R}$. Similarly $\singletonDeletion{\R^n}=\Set{\R^n \setminus \singleton{0}}$. In the case of the unit interval $I$ we have $\singletonDeletion{I}=\Set{[0,1), [0,1) \oplus [0,1)}$. For the sphere, stereographic projection gives $\singletonDeletion{S^n}=\Set{\R^n}$. The Cantor set has the deck $\singletonDeletion{C}=\singleton{C \setminus \singleton{0}}$. Lastly, Cantor's back-and-forth method gives $\singletonDeletion{\Q} = \Set{\Q}$ and $\singletonDeletion{P}=\Set{P}$ for the rationals $\Q$ and the irrationals $P$.
 
We now introduce the central concept of this paper, the notion of reconstruction. Given topological spaces $X$ and $Z$, we say that $Z$ is a \emph{reconstruction of} $X$ if their decks agree. A topological space $X$ is said to be \emph{reconstructible} if the only reconstructions of it are the spaces homeomorphic to $X$. In the same spirit, we say that a property of topological spaces is reconstructible if it is preserved under reconstruction.

Formally, a space $X$ is reconstructible if $\singletonDeletion{X}=\singletonDeletion{Z}$ implies $X \cong Z$ and a property $\script{P}$ of topological spaces is reconstructible if $\singletonDeletion{X}=\singletonDeletion{Z}$ implies ``$X$ has $\script{P}$ if and only if $Z$ has $\script{P}$".

We remark that one could just as well consider the deck of a space as multi-set instead of just a set -- we define the \emph{multi-deck} of a space $X$ to be the multi-set $\multideck{X}=\set{[X \setminus \singleton{x}]_\sim}:{x \in X}$. In other words, the multi-deck not only knows which cards occur, but also how often they occur. If a space is reconstructible from its multi-deck, we will say it is \emph{weakly reconstructible}. The formal definitions are exactly the same as above, where $\singletonDeletion{X}$ is replaced by $\multideck{X}$. Clearly, reconstructible spaces and properties are weakly reconstructible.


In Section \ref{section:counterexamples} we give examples of some reconstructible spaces and present different techniques for reconstructing topological spaces. We also recall some classic topological characterisations of common topological spaces. Further, we present examples of non-reconstructible spaces and comment on which properties of them are not reconstructible, most importantly that neither compactness nor connectedness are reconstructible in general. 

These examples inform and delimit the subsequent investigation into which properties are reconstructible. The recurring theme will be that for a space $X$ having a topological property $\mathcal{P}$, if sufficiently many cards share $\mathcal{P}$ then every reconstruction will satisfy $\mathcal{P}$. Thus, in Section \ref{section:separation} we will show that the common separation axioms with the exception of normality are reconstructible. We will also discuss normality in some detail in that section. We will end this section with a short proof that in $T_1$-spaces local properties are reconstructible. In Section \ref{section:cardinalInvariants} we will show that cardinal invariants such as weight, character, density and spread are reconstructible. We will also investigate the number of isolated points of a space and show that spaces with a finite number of isolated points are reconstructible.

With these tools we show in Section \ref{section:StoneCech} that in Hausdorff spaces, reconstructing compactness is equivalent to reconstructing the space. We prove that Stone-\Cech\ spaces (Stone-\Cech\ compactifications of non-compact Tychonoff spaces) and spaces that arise as maximal finite-point compactifications are reconstructible. We will also use these ideas to show that a variety of topological properties which can be expressed in terms of the Stone-\Cech\ compactification are reconstructible.

Next, in Section \ref{section:compactness} we take a brief look at compactness-like properties. By considering the weaker properties Lindel\"ofness and pseudocompactness we can show that if a compact Hausdorff space $X$ contains a $G_\delta$-point and a non-$G_\delta$-point then it is reconstructible. We finish this section by showing that metrizability and complete metrizability is reconstructible.

In the final Section \ref{section:connectedness} we consider connectedness: although not reconstructible in general, we manage to show that connectedness is weakly reconstructible in separable Tychonoff spaces. The main interest here lies in the fact that there are connected separable Tychonoff spaces without connected cards. It is also remarkable, that this is the only example where our reconstruction result depends on the deck being a multi-set. For all other results, it is not important to know how often a particular card occurs.

Our terminology follows \cite{Engelking}, except that we do no automatically include separation properties in the definition of compactness properties. Unless otherwise indicated, higher separation axioms always include lower separation axioms. The disjoint sum of topological spaces $X_i, i \in \kappa$ will be denoted by $\disjointSum_{i \in \kappa} X_i$. If all $X_i$ are homeomorphic to $X$, we also write this as $\kappa \cdot X$. For a cardinal number $\kappa$, we denote by $\mathfrak{I}_{\kappa}$ the indiscrete space and by $\discrete{\kappa}$ the discrete space on $\kappa$ elements.

\section{Examples and Counterexamples}\label{section:counterexamples}

\subsection{Examples of reconstructible spaces}
One important tool in showing that certain spaces are reconstructible is to characterise them and then show that each of the characterising properties is reconstructible. We will list here some of the spaces we show are reconstructible with forward pointers to the theorems that are needed to complete the proofs.

\begin{theorem}\label{thm:CantorCube}
For uncountable $\kappa$, the Cantor cube $2^\kappa$ and $[0,1]^\kappa$ is reconstructible. None of their cards are compact, paracompact, Lindel\"of or normal.
\end{theorem}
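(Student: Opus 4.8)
The plan is to derive reconstructibility from the result of Section~\ref{section:StoneCech} that the Stone--\v{C}ech compactification of a non-compact Tychonoff space is reconstructible, and to establish the statements about the cards by exhibiting a closed copy of the Tychonoff plank inside each card.

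For the reconstructibility, let $X$ be either $2^\kappa$ or $[0,1]^\kappa$, fix a point $b\in X$, and let $\Sigma\subseteq X$ be the $\Sigma$-product around $b$, that is, the set of points differing from $b$ at only countably many coordinates. As a subspace of $X$ the set $\Sigma$ is Tychonoff; it is dense in $X$ but, since $\kappa$ is uncountable, it is a \emph{proper} dense subspace, so it is not compact. The key point is that $\Sigma$ is $C^*$-embedded in $X$. Indeed, a continuous real-valued function on a $\Sigma$-product of separable metrizable spaces depends on only countably many coordinates, and any function that depends on countably many coordinates extends continuously over all of $X$, simply by factoring through the projection of $X$ onto the corresponding countable subproduct followed by the obvious section back into $\Sigma$. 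Hence $X$ is the Stone--\v{C}ech compactification of the non-compact Tychonoff space $\Sigma$, and so $X$ is reconstructible by the theorem of Section~\ref{section:StoneCech}.

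For the cards, note first that both $2^\kappa$ and $[0,1]^\kappa$ are homogeneous: $2^\kappa$ is a topological group, and $[0,1]^\kappa\cong\bigl([0,1]^\omega\bigr)^\kappa$ is a product of copies of the homogeneous Hilbert cube. Consequently $X$ has, up to homeomorphism, only one card $X\setminus\singleton{x_0}$, and it suffices to show that this card fails to be normal: being a subspace of a compact Hausdorff space it is regular and Hausdorff, and since compact Hausdorff spaces, paracompact Hausdorff spaces, and regular Lindel\"of spaces are all normal, the failure of normality rules out all four listed properties at once. To see that the card is not normal, observe that the compact Hausdorff space $[0,\omega_1]\times(\omega+1)$ has weight $\omega_1\le\kappa$ (and is zero-dimensional), so it embeds into $X$, and its image is compact and hence closed in $X$. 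Composing with a self-homeomorphism of $X$ we may assume that the corner point $(\omega_1,\omega)$ is sent to $x_0$. Then the image of the Tychonoff plank $\bigl([0,\omega_1]\times(\omega+1)\bigr)\setminus\singleton{(\omega_1,\omega)}$ is a closed subspace of $X\setminus\singleton{x_0}$ which is homeomorphic to the Tychonoff plank; since the latter is not normal and normality is inherited by closed subspaces, the card is not normal.

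The substantive part of the argument is the reconstructibility, and within it the verification that $\Sigma$ is $C^*$-embedded --- equivalently, that $X$ really does arise as a Stone--\v{C}ech compactification. This is exactly the place where uncountability of $\kappa$ is needed: for $\kappa=\omega$ the space $2^\omega$ is the Cantor set, which is \emph{not} reconstructible. Once the Stone--\v{C}ech theorem of Section~\ref{section:StoneCech} is in hand, everything else --- including the standard non-normality of the Tychonoff plank and the separation-axiom implications --- is routine.
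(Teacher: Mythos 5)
Your proof is correct and follows essentially the same route as the paper: reconstructibility via Theorem \ref{thm:StoneCech} once $2^\kappa$ and $[0,1]^\kappa$ are recognised as Stone--\v{C}ech compactifications of non-compact spaces, and the statement about the cards via homogeneity plus a closed copy of the Tychonoff plank. The only difference is that where the paper simply cites Glicksberg for the Stone--\v{C}ech fact, you sketch its standard proof through the $\Sigma$-product being dense, non-compact and $C^*$-embedded --- note that the countable-coordinate-dependence of continuous functions on the $\Sigma$-product is itself a nontrivial classical factorization theorem, so this is an expansion of the citation rather than a genuinely different argument.
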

\begin{proof}
For uncountable $\kappa$ and $X=2$ or $[0,1]$, the space $X^\kappa$ is a Stone-\Cech{} compactification of a non-compact spaces \cite{Glicksberg} and hence by Theorem \ref{thm:StoneCech} reconstructible. That none of its cards have the properties from above follows from the fact that $X^\kappa$ is homogeneous and contains the Tychonoff plank $(\omega_1+1) \times (\omega+1)$.
\end{proof}

\begin{theorem}
\label{mythm15}
\label{thm:reals}\label{thm:rationals}\label{thm:irrationals}
 The space of real numbers $\R$, the space of rational numbers $\Q$ and the space of irrational numbers $P$ are  reconstructible spaces.
 \end{theorem}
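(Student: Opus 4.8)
The plan is to follow the recipe announced at the start of this section: for each of the three spaces, fix a classical topological \emph{characterisation} of it and verify that every property occurring in the characterisation is reconstructible. So let $Z$ be a reconstruction of $X \in \Set{\R, \Q, P}$. Every card is nonempty and infinite, hence so is $Z$; and since the separation axioms below normality together with the cardinal invariants weight and density are reconstructible (Sections \ref{section:separation} and \ref{section:cardinalInvariants}), $Z$ is Tychonoff with $w(Z)=w(X)$ and $d(Z)=d(X)$. For $X=\Q$ I would invoke Sierpi\'nski's theorem: a nonempty countable metrizable space without isolated points is homeomorphic to $\Q$. From $\singletonDeletion{Z}=\singletonDeletion{\Q}=\Set{\Q}$ one reads off that every card of $Z$ is countable, so $Z$ is countable; that $Z$ is metrizable, since metrizability is reconstructible (Section \ref{section:compactness}) and $\Q$ is metrizable; and that $Z$ has no isolated points, because an isolated point $z_0$ of $Z$ would remain isolated in the card $Z \setminus \singleton{z}$ for any $z \neq z_0$, contradicting that this card is a copy of $\Q$. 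Sierpi\'nski's theorem then gives $Z \cong \Q$.

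For $X=P$ the tool is the Alexandrov--Urysohn characterisation: a nonempty, separable, completely metrizable, zero-dimensional space in which no point has a compact neighbourhood is homeomorphic to $P$. Given $\singletonDeletion{Z}=\Set{P}$, reconstructibility of density and of complete metrizability (Section \ref{section:compactness}) make $Z$ a nonempty separable completely metrizable space. Zero-dimensionality is a local property, hence reconstructible in $T_1$-spaces (Section \ref{section:separation}); alternatively one checks directly that a small enough clopen neighbourhood of a point in a card $Z \setminus \singleton{z}$ is already clopen in $Z$. For the final clause, note first that $Z$ is not compact: an open subspace of a compact Hausdorff space is locally compact, whereas $P$ is not. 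Hence, were some $w \in Z$ to have a compact neighbourhood $K$, we would have $K \neq Z$, could choose $z \in Z \setminus K$, and $K$ would still be a compact neighbourhood of $w$ in $Z \setminus \singleton{z} \cong P$ -- a contradiction. Alexandrov--Urysohn now yields $Z \cong P$.

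For $X=\R$, from $\singletonDeletion{Z}=\singletonDeletion{\R}=\Set{\R \oplus \R}$ we see that every card of $Z$ is a $1$-manifold; since $Z$ is $T_1$, each card is open in $Z$, so in fact every point of $Z$ has an open neighbourhood homeomorphic to $\R$. As $Z$ is also Hausdorff and second countable (reconstructibility of weight), $Z$ is a second-countable Hausdorff $1$-manifold without boundary, so by the classification of these, $Z \cong \bigoplus_{i<\alpha}\R \oplus \bigoplus_{j<\beta}S^1$ for some countable cardinals $\alpha,\beta$ with $\alpha+\beta\geq 1$ (the number of components is countable since $Z$ is Lindel\"of). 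A short case analysis finishes it: if $\beta\geq 1$ then, unless $Z \cong S^1$ whose only card is $\R$, deleting a point lying outside an $S^1$-component leaves a card with a compact component, which $\R \oplus \R$ has not; so $\beta=0$, and then matching the number of components of a card of $\bigoplus_{i<\alpha}\R$ with that of $\R \oplus \R$ forces $\alpha=1$. Hence $Z \cong \R$.

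I expect the main difficulty to be choosing, in each case, a characterisation all of whose ingredients are reconstructible. For $\Q$ this is essentially free. For $P$ it forces the two small direct arguments above -- that zero-dimensionality and the absence of compact neighbourhoods survive reconstruction -- neither of which is among the ``named'' reconstructible properties, the second relying on the observation that $Z$ cannot be compact. For $\R$ the point is to notice that the single card $\R \oplus \R$ pins down the local structure of $Z$ tightly enough to bring the classification of $1$-manifolds into play, after which the combinatorics of the deck is routine.
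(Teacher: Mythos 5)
Your treatment of $\Q$ and $P$ is correct and essentially the paper's own argument: Sierpi\'nski's characterisation plus reconstructibility of metrizability and of isolated points for $\Q$, and the Alexandroff--Urysohn characterisation plus complete metrizability, separability, zero-dimensionality (as a local property) and a direct two-case argument against local compactness for $P$; your ``no point has a compact neighbourhood'' argument is a cosmetic variant of the paper's ``no proper open compact subset'' argument, including the same observation that $Z$ itself cannot be compact because cards of a compact space would be locally compact while $P$ is not. For $\R$, however, you take a genuinely different route. The paper uses Ward's characterisation of the open interval (metrizable, locally connected, separable, connected, every card has exactly two components) and must therefore know that every reconstruction of $\R$ is connected; this is Corollary \ref{mycor1}, which rests on the Whyburn-type Theorem \ref{thm:Whyburn} and the weak reconstructibility of connectedness for separable Tychonoff spaces (Theorem \ref{thm:SeparableConnectedness}), i.e.\ on the heaviest machinery of Section \ref{section:connectedness}. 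You instead observe that since $T_1$, $T_2$ and weight are reconstructible and every card is $\R \oplus \R$, each point of $Z$ lies in an open card and hence has a Euclidean neighbourhood, so $Z$ is a second-countable Hausdorff $1$-manifold; the classification of such manifolds reduces the problem to a short deck computation ruling out circle components and more than one line component, which you carry out correctly. Your approach buys independence from the connectedness-reconstruction results (and from the multi-deck subtleties they involve) at the cost of importing the external classification of $1$-manifolds; the paper's approach stays within its own reconstruction toolkit and showcases Corollary \ref{mycor1}, which is of independent interest. Both arguments are sound.
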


\begin{proof} The reals: Ward proved in \cite{Ward} that every metric, locally connected, separable connected space, where every card has exactly two components, is homeomorphic to the space of real numbers. By Theorems \ref{mythm14}, \ref{localproperty} and \ref{mythm7}, any reconstruction of $\R$ is metrizable, locally connected and separable. Moreover, Corollary \ref{mycor1} gives that any reconstruction of $\R$ is connected. 
Thus, $\R$ is reconstructible.

The rationals: By Sierpinski's theorem (see \cite{Sierpinski} or \cite[1.9.6]{Mill01}), every dense-in-itself countable metrizable space is homeomorphic to the space of rational numbers. By Theorems \ref{mythm18} and \ref{mythm14}, any reconstruction of $\Q$ shares these properties. Hence $\Q$ is reconstructible.
	
The irrationals: Alexandroff and Urysohn (see \cite{Alexandroff} or \cite[1.9.8]{Mill01}) showed that every topologically complete, zero-dimensional, separable metric space which does not contain a proper open compact subset is homeomorphic to the space $P$ of irrational numbers. If $X$ is a reconstruction of $P$ then by Theorems \ref{completerec}, \ref{localproperty} and \ref{mythm7}, $X$ is a completely metrizable, zero-dimensional, separable space. We will show that $X$ contains no proper open compact subset. Let $U \subset X$ be an open subset. If $U \subsetneq X$, there exists an $x \in X$ such that $U \subset X \setminus \singleton{x} \cong P$. This shows that $U$ is not compact. Now assume that $U=X$ is a compact space. Then every card would inherit local compactness. This contradicts the fact that $P$ is not locally compact. Thus, $X$ contains no proper open compact subset and therefore $X \cong P$.
\end{proof} 

\begin{theorem}
\label{mythm24}
	The unit interval $I$ and the spheres $S^n$ are reconstructible.
\end{theorem}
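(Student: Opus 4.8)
The plan is to handle both $I$ and the spheres $S^n$ by the same two moves: first to show, using the reconstructibility of metrizability, of separability and of local properties, that every reconstruction $Z$ is a well-behaved (metrizable, separable, hence second-countable) manifold of the appropriate dimension; and then to pin $Z$ down up to homeomorphism from its very small deck.

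First the interval, where $\singletonDeletion{I}=\Set{[0,1),\, [0,1)\disjointSum [0,1)}$. The observation to exploit is that every point of $I$, and of each of its two cards, has an \emph{open} neighbourhood homeomorphic to $[0,1)$: an endpoint-type point retains a half-open interval, while an interior point $t$ has the open neighbourhood $(a,1]\cong[0,1)$ in $I$ (or $[0,b)\cong[0,1)$ inside a card). Since ``possessing an open neighbourhood homeomorphic to $[0,1)$'' is a local property, Theorems~\ref{mythm14}, \ref{mythm7} and~\ref{localproperty} together show that a reconstruction $Z$ of $I$ is a second-countable Hausdorff $1$-manifold with (possibly empty) boundary, and hence --- each component being open --- a countable topological disjoint sum of copies of $[0,1]$, $[0,1)$, $\R$ and $S^1$. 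I would then remove every summand other than $[0,1]$, and all but one of those, by inspecting cards: a summand homeomorphic to $\R$ or $S^1$ reappears as a component of the card obtained by deleting a point of some other summand; a summand homeomorphic to $[0,1)$ produces, upon deletion of its endpoint, a card with a component homeomorphic to $\R$; and if $Z$ were a disjoint sum of two or more copies of $[0,1]$, then every card would still have a component homeomorphic to $[0,1]$. None of $\R$, $S^1$, $[0,1]$ occurs as a component of $[0,1)$ or of $[0,1)\disjointSum [0,1)$, and the degenerate whole-space possibilities $Z\cong\R$, $Z\cong S^1$, $Z\cong[0,1)$ are excluded outright since their decks differ from $\singletonDeletion{I}$. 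This forces $Z\cong[0,1]=I$.

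Next the spheres, where $\singletonDeletion{S^n}=\Set{\R^n}$, so $Z\setminus\Set{z}\cong\R^n$ for every $z\in Z$; since every point of $S^n$ and of $\R^n$ has an open neighbourhood homeomorphic to $\R^n$, the same three ingredients show that $Z$ is a second-countable Hausdorff $n$-manifold without boundary. Fix $z_0\in Z$ and an open neighbourhood $U\ni z_0$ with $U\cong\R^n$. Applying the Mayer--Vietoris sequence to the open cover $Z=(Z\setminus\Set{z_0})\cup U$, whose two members are contractible and whose intersection $U\setminus\Set{z_0}\cong\R^n\setminus\Set{0}$ is homotopy equivalent to $S^{n-1}$, yields $\widetilde H_k(Z)\cong\widetilde H_{k-1}(S^{n-1})$ for all $k$, so that $Z$ has the integral homology of $S^n$. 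From $\widetilde H_0(Z)=0$ we deduce that $Z$ is connected, and from $H_n(Z)\cong\Z\neq 0$ --- together with the standard fact that a connected $n$-manifold without boundary has vanishing $n$-th homology exactly when it is non-compact --- we deduce that $Z$ is compact. Finally, a compact Hausdorff space containing a point whose complement is non-compact is the one-point compactification of that complement; hence $Z$ is the one-point compactification of $\R^n$, i.e.\ $Z\cong S^n$.

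The crux in both cases is obtaining \emph{compactness} of the reconstruction, since neither deck contains even one compact space: for $I$ it has to be extracted combinatorially by forbidding the non-compact building blocks $[0,1)$, $\R$ and $S^1$, and for $S^n$ it falls out of the top-dimensional homology computation rather than from any topological characterisation of the $n$-sphere (no purely point-set characterisation being available once $n\geq 3$). And the step that gets the whole argument off the ground is recognising $Z$ as a manifold in the first place, which is precisely where the reconstructibility of local properties in $T_1$-spaces (Theorem~\ref{localproperty}) is used.
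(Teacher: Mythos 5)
Your proof is correct, but it takes a genuinely different route from the paper. The paper argues entirely via compactification theory: by Magill's classification of finite-point compactifications of $\R^n$, the card $[0,1)$ of $I$ (respectively, the card $\R^n$ of $S^n$, $n\geq 2$) admits no $2$-point compactification, so Theorem \ref{mythm21} (maximal finite-point compactifications are reconstructible, which rests on Theorem \ref{compactreconstruction}) applies; for $S^1$ a short direct argument shows a non-compact reconstruction would have to be $[0,1)$, whose deck is wrong. You instead use the reconstruction machinery (Theorems \ref{mythm14}, \ref{mythm7}, \ref{localproperty}) to show that any reconstruction is a second-countable $1$- resp.\ $n$-manifold, then finish with the classification of second-countable $1$-manifolds with boundary plus a component-by-component deck analysis for $I$, and with a Mayer--Vietoris computation for $S^n$ showing the reconstruction has the homology of $S^n$, whence connectedness and (via non-vanishing top homology) compactness, so that it must be the unique one-point compactification of $\R^n$. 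Both arguments hinge on extracting compactness of the reconstruction, which no card exhibits; the paper's route buys this from its general compactification theorems (and stays purely point-set, also covering non-manifold situations like Theorem \ref{thm:StoneCech}), while yours trades Magill's theorem for standard manifold classification and algebraic topology, converging with the paper only at the final identification of the reconstruction as $\omega\R^n\cong S^n$. One cosmetic point: to invoke Theorem \ref{localproperty} you should phrase your local property as the existence of a neighbourhood \emph{basis} of sets homeomorphic to $[0,1)$ (resp.\ to $\R^n$), as in the paper's definition; half-open intervals $(x-\epsilon,x+\epsilon]$ and open balls supply such bases, so this is easily repaired and does not affect the argument.
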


\begin{lemma}[{\cite{Magill} or \cite[6.10 \& 6.11]{Chandler}}]
\label{mythm23}
(1) The only finite compactifications of $\R$ are $S^1$ and $[-\infty,\infty]$. (2) The only finite compactification of $\R^n$ $(n \geq 2)$ is $S^n$. \qed \end{lemma}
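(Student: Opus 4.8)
The plan is to read ``finite compactification'' as a Hausdorff compactification $K$ of $\R^n$ whose remainder $K \setminus \R^n$ is a finite set $\Set{p_1, \dots, p_k}$; since $\R^n$ is locally compact Hausdorff it is open and dense in any such $K$, and the remainder, being closed, is compact. I would first \emph{separate the remainder points}: as $K$ is compact Hausdorff and hence normal, choose pairwise disjoint open neighbourhoods $U_1, \dots, U_k$ of $p_1, \dots, p_k$ and set $V_i = U_i \cap \R^n$. Then $\R^n \setminus (V_1 \cup \dots \cup V_k)$ has compact $K$-closure disjoint from every $p_i$, so it is a compact set $C \subseteq \R^n$. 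Thus, outside the compact set $C$, the space $\R^n$ is partitioned by the disjoint open sets $V_1, \dots, V_k$.

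The core observation is that the points $p_i$ correspond to the unbounded directions of $\R^n$. Each $V_i$ must be unbounded: a bounded $V_i$ would have compact $\R^n$-closure, hence a $K$-closed closure, giving $p_i \notin \overline{V_i}^K$; but $V_i$ is dense in $U_i \ni p_i$, forcing $p_i \in \overline{V_i}^K$, a contradiction. Enclosing $C$ in a ball $B$, a routine connectedness argument shows the bounded components of $\R^n \setminus C$ all lie in $B$, so any unbounded subset of $\R^n \setminus C$ meets an unbounded component; and each unbounded component, being connected and covered by the disjoint open sets $V_i$, lies in a single $V_i$. Hence the number $k$ of remainder points is controlled by the number of unbounded components of $\R^n \setminus C$, i.e.\ the ends of $\R^n$.

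I would then count ends. For $n \ge 2$ the set $\R^n \setminus B$ is connected, so $\R^n \setminus C$ has a single unbounded component, which lies in one $V_i$; since \emph{every} $V_i$ is unbounded this forces $k = 1$, so $K$ is the one-point compactification of $\R^n$, which is $S^n$. For $n = 1$ the set $\R \setminus C$ has exactly two unbounded components, a left ray $(-\infty, c)$ and a right ray $(d, \infty)$, so $k \in \Set{1, 2}$. If both rays lie in the same $V_i$ then, again by unboundedness of all $V_i$, we get $k = 1$ and $K$ is the one-point compactification $S^1$. If the two rays lie in distinct sets, say $(-\infty, c) \subseteq V_1$ and $(d, \infty) \subseteq V_2$, then $k = 2$.

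The remaining, and most delicate, step is to identify this two-point compactification with $[-\infty, \infty]$. The key is to show that every open neighbourhood of $p_1$ contains a left ray (and symmetrically for $p_2$). Given an open $U \ni p_1$, suppose some sequence $x_j \to -\infty$ lies in $\R \setminus U$; passing to a subnet, $x_j$ converges in the compact space $K$ to a point $q$. Since $x_j \to -\infty$ leaves every compact subset of $\R$, the limit $q$ is a remainder point; since $\R \setminus U$ is closed in $K$ we have $q \neq p_1$; and $q \neq p_2$ because eventually $x_j \in (-\infty, c) \subseteq V_1 \subseteq U_1$, which is disjoint from the neighbourhood $U_2$ of $p_2$. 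This contradiction shows $\R \setminus U$ is bounded below, i.e.\ $U \cap \R \supseteq (-\infty, s)$ for some $s$. With these ray-neighbourhoods in hand, the order homeomorphism $\R \cong (0,1)$ extends to a continuous bijection $K \to [0,1]$ sending $p_1 \mapsto 0$ and $p_2 \mapsto 1$, which is a homeomorphism since $K$ is compact and $[0,1]$ is Hausdorff; hence $K \cong [-\infty, \infty]$. I expect this identification — establishing that the neighbourhood filters at the two remainder points are exactly the ray filters, equivalently that the two ends are genuinely separated — to be the main obstacle, whereas the reduction to one or two ends is comparatively routine.
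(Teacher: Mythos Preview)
The paper does not give its own proof of this lemma: it is quoted from Magill and Chandler and closed with a \qed, so there is nothing to compare against. Your self-contained argument via ends of $\R^n$ is correct and is essentially the standard one.

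One small slip in the final paragraph: the statement ``every $K$-neighbourhood of $p_1$ contains a left ray'' yields continuity of the \emph{inverse} map $g\colon [0,1]\to K$ at $0$ (the $g$-preimage of such a neighbourhood then contains some $[0,\varepsilon)$), not continuity of your map $f\colon K\to[0,1]$ at $p_1$, which would instead require a $K$-neighbourhood of $p_1$ whose trace on $\R$ is \emph{contained in} a prescribed left ray. Since $[0,1]$ is compact and $K$ is Hausdorff, $g$ is a homeomorphism anyway, so the fix is simply to swap domain and codomain in your last sentence and apply the compact--Hausdorff argument to $g$ rather than $f$.
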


\begin{proof}
$\mathbf{I}$. $I$ has a card $Y=[0,1)$. Now any 2-point compactification of $Y$ would be a 3-point compactification of $(0,1)$, contradicting Lemma \ref{mythm23}(1). Applying Theorem \ref{mythm21} gives that $I$ is reconstructible. 

$\mathbf{S^1}$. Suppose $X$ is a reconstruction of $\singletonDeletion{S^1}=\Set{\R}$ which is not compact. As $X$ must be locally compact Hausdorff, there exists an Alexandroff compactification $\omega X$ which then is a 2-point compactification of $Y=\R$. Lemma \ref{mythm23}(1) tells us that all such compactifications are unique, thus $\omega X \cong [0,1]$. Since $X$ must be connected, we have $X \cong [0,1)$. But then $\singletonDeletion{X} \neq \Set{\R}$. Hence $X$ must be compact and therefore is the unique minimal compactification of $\R$, which is $S^1$. 

$\mathbf{S^n}, \; n \geq 2$. Lemma \ref{mythm23}(2) gives that no card has a 2-point compactification, so we may apply Theorem \ref{mythm21}. \end{proof}

Note that a similar method can be used in order to show that the Euclidean spaces $\R^n$ for $n \geq 2$ are reconstructible. In fact, one shows that $\R^n \setminus \singleton{0}$ has only a 1-point compactification or the 2-point compactification $S^n$. Then it is not hard to see that if $X$ is a reconstruction from $\singletonDeletion{\R^n}$ then $X$ cannot be compact and $\omega X \cong S^n$.

\subsection{Non-reconstructible spaces and properties}
In the graph-theoretic case, the standard infinite counterexample to the reconstruction conjecture is the tree of countable infinite degree $T$, as its deck $\script{D}(T)=\Set{{\aleph_0 \cdot T}}$ is the same as of say $\aleph_0 \cdot T$ itself. In the topological world, this has a surprising analogy, namely the Cantor set C.

\begin{example}
We have $\singletonDeletion{C}=\singletonDeletion{C \setminus \singleton{0}}$, hence the Cantor set is not reconstructible. In particular, the properties of compactness, countable compactness and pseudocompactness are non-reconstructible.
\end{example}

\begin{proof}
This follows easily from the observation that $C \setminus \singleton{0} \cong \aleph_0 \cdot C$.
\end{proof}

Interestingly, the Cantor set is just one particular example of a whole family of non-reconstructible spaces. Spaces which only have $\lambda$ different homeomorphism types amongst their open subspaces (for some cardinal $\lambda$) are said to be of \emph{diversity} $\lambda$ \cite{diversity}. The Cantor set is a compact Hausdorff space of diversity 2, and it is easy to see that in fact every such space $X$ is non-reconstructible, as $\singletonDeletion{X}=\singletonDeletion{X \setminus \singleton{x}}$. Hence, for example, the Double Arrow space $D$ and also the product $D \times C$ are non-reconstructible \cite{diversity}.


\begin{example}
Connectedness is a non-reconstructible property. Also, there are spaces with an arbitrary large number of non-homeomorphic reconstructions.
\end{example}

We use the same idea as in the previous example: construct a space where every card is an infinite disjoint sum of the original space.

\begin{proof}[Construction] We present a sketch of the construction. Start with the open unit interval and replace every point by the open hedgehog of spininess $\kappa$ with the centre taking the place of the original point. We continue inductively, replacing every non-central point again by a hedgehog of spininess $\kappa$ (with half-open spines). If we put the natural metric onto the resulting space, namely, the distance between two points is the \lq shortest walk\rq\ from one to the other along the spines, we obtain a connected metrizable space $X$ with cards homeomorphic to $\kappa \cdot X$ (removing a point leaves the components: everything to the left along the spine containing the point; everything to the right along that spine; and everything on each of the spines starting at that point), showing that connectedness is not reconstructible. 

For $\lambda \le \kappa$, every card of $\lambda \cdot X$ is homeomorphic to $\kappa \cdot X$. Therefore, $X$ has $\kappa$ many distinct reconstructions.
\end{proof}

\begin{example}
Lindel\"ofness is a non-reconstructible property.
\end{example}

\begin{proof}[Construction] We use the following, somewhat different topology, inspired by filtration spaces \cite{Juhasz} and resolutions \cite{Watson}.

Fix a cardinal $\kappa$. For $n \in \omega$, let $L_n = (0,1) \times (\kappa \times (0,1))^n$ and set $X = \Union_n L_n$.  Viewing elements of $X$ as partial functions from $\omega$, the notion of extension makes sense. For $x=(x_0,\lambda_1,x_1,\dots,\lambda_n,x_n) \in X$ and $\epsilon > 0$ we write 
\begin{eqnarray*}
(x-\epsilon,x+\epsilon) &=& \{y \in X\colon y \text{ equals or extends } (x_0,\dots,\lambda_n,a), \\
&& \text{ where } a \in (0,1) \intersect (x_n-\epsilon,x_n+\epsilon)\} 
\end{eqnarray*}
and similarly for $[x-\epsilon,x+\epsilon]$. For $x \in X$, $y_1,\dots,y_n$ extending $x$ and $\epsilon,\delta >0$ define 
$$
\langle x,\epsilon,y_1,\delta,\dots,y_n,\delta \rangle = (x-\epsilon,x+\epsilon) \setminus \Union [y_j-\delta,y_j+\delta].
$$
Sets of this form are a neighbourhood basis at  $x \in X$. It is Hausdorff, regular and since each $L_n$ is Lindel\"of, so is $X$. Every card is a disjoint sum of $\kappa$-many copies of $X$ and hence every card is in fact also a reconstruction of $X$. But $X$ is connected and Lindel\"of whereas the cards are not (if $\kappa$ is uncountable).
\end{proof}

Our last example, which was pointed out to the authors by Mika G\"o\"os, shows that Theorem \ref{mythm19}, which states that $T_1$ spaces with finitely many isolated points are reconstructible, does not hold in the infinite case.

\begin{example}
There exists a non-reconstructible space containing infinitely many isolated points.
\end{example}

\begin{proof}[Construction]
Denote by $O$ the hedgehog of countable spininess (again, with half-open spines). Let
\[X_1=O--O==O--O==O--O== \cdots\]
\[X_2=O==O--O==O--O==O-- \cdots\] where $O--O$ means connecting the centres with a simple copy of $[0,1]$ and $O==O$ means connecting the centres with countably many copies of $[0,1]$. Now for $n \in \N$ let $G_i^n$, $i=1,2$, be the initial sequences of the spaces $X_i$ ending at the $n^{th}$ centre, e.g.\ $G_1^4=O--O==O--O$. Finally, define $G$ to be 
\[G= (\aleph_0 \cdot (0,1)) \oplus  \bigoplus_{n\geq1} (\aleph_0 \cdot G_1^n)  \oplus \bigoplus_{n\geq1} (\aleph_0 \cdot G_2^n).\] Then $\singletonDeletion{X_1 \oplus G}= \Set{X_1 \oplus G,X_2 \oplus G}=\singletonDeletion{X_2 \oplus G}$, hence both spaces are non-reconstructible. Similarly, the spaces $X_1 \oplus G \oplus \discrete{\aleph_0}$ and $X_2 \oplus G \oplus \discrete{\aleph_0}$ have the same deck and hence are non-reconstructible.
\end{proof}

\section{Separation Properties}\label{section:separation}
In this section we will see that all hereditary separation axioms are reconstructible. 
		
\begin{theorem}
\label{mythm8}
If $\cardinality{X} \geq 3$, then the property $T_i$ is reconstructible for $i \neq 4$. For these $i$, $X$ is $T_i$ if and only if every card in the deck $\singletonDeletion{X}$ is $T_i$. In the $T_4$ case we have that if all cards $Y \in \singletonDeletion{X}$ are $T_3$ and one of them is $T_4$, then $X$ is $T_4$ as well.
\end{theorem}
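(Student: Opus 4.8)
The plan is to prove the stronger characterisation (``$X$ is $T_i$ if and only if every card is $T_i$''), from which reconstructibility is immediate: if $\mathcal{D}(X)=\mathcal{D}(Z)$ and $X$ is $T_i$, then by heredity every card of $X$, hence every card of $Z$, is $T_i$; since $|X|\ge 3$ every card has at least two points (or is infinite), so the same holds for $Z$, forcing $|Z|\ge 3$, and then $Z$ is $T_i$ by the characterisation. The forward direction is trivial: for $i\ne 4$ the axiom $T_i$ passes to subspaces, in particular to cards. So the work is the converse, and the recurring device is a \emph{third point}: to test a separation property on a point or a pair, delete a point not involved in the test.

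For $T_0$ and $T_1$ this already suffices. Given distinct $x,y$, pick $z\notin\{x,y\}$ (possible as $|X|\ge 3$); in the $T_0$ card $C=X\setminus\{z\}$ an open set separating $x$ and $y$ is the trace of an open set of $X$ that still separates them, as $x,y\ne z$. For $T_1$: to see $\{x\}$ is closed, fix $y\ne x$, pick $z\notin\{x,y\}$, and note $C\setminus\{x\}$ is open in the $T_1$ card $C=X\setminus\{z\}$, hence equals $V\cap C$ for some open $V\subseteq X$ with $y\in V$ and $x\notin V$. Once $X$ is $T_1$, a deleted point leaves an \emph{open} subcard, and this upgrades the third-point device to $T_2$ and $T_3$: disjoint $C$-open neighbourhoods of $x$ and $y$ in $C=X\setminus\{z\}$ are automatically open in $X$; and the same works for a point $x$ and a closed set $F$ provided some $z\notin\{x\}\cup F$ exists, while if $X=\{x\}\cup F$ then $F=X\setminus\{x\}$ is closed, so $\{x\}$ is clopen and $x,F$ are trivially separated.

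The delicate case, which I expect to be the main obstacle, is $T_{3.5}$ (complete regularity): a continuous function on a card $X\setminus\{z\}$ need not extend continuously across $z$, so one cannot simply pull back a Urysohn function. The remedy is to exploit that $X$ is already known to be regular (its cards, being $T_{3.5}$, are $T_3$). Given $x$ and a closed $F\not\ni x$, and a point $z\notin\{x\}\cup F$, use regularity of $X$ to choose an open $O\ni z$ with $x\notin\overline{O}$, and put $W=O\setminus\{z\}$, an open subset of $C=X\setminus\{z\}$. Then $F\cup\overline{W}^{\,C}$ is closed in $C$ and misses $x$, so complete regularity of $C$ yields a continuous $g\colon C\to[0,1]$ with $g(x)=1$ and $g\equiv 0$ on $F\cup\overline{W}^{\,C}$; extend it by $g(z)=0$. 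The extension is continuous on $C$ (open in $X$) and continuous at $z$ since it is identically $0$ on the neighbourhood $O=W\cup\{z\}$ of $z$, and $1-g$ separates $x$ from $F$. The degenerate case $X=\{x\}\cup F$ is handled as in the $T_3$ step via the clopen singleton $\{x\}$.

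For the $T_4$ assertion, suppose every card is $T_3$ and $Y=X\setminus\{x_0\}$ is $T_4$; by the above $X$ is $T_3$, in particular $T_1$ and regular. Given disjoint closed $A,B\subseteq X$: if $x_0\notin A\cup B$, then $A,B$ are disjoint closed subsets of the normal space $Y$ and the separating $Y$-open sets are open in $X$ (as $Y$ is open in $X$). If instead, say, $x_0\in A$, use regularity to separate $x_0$ from $B$ by disjoint open $G\ni x_0$ and $H\supseteq B$; then $A\setminus G$ and $B$ are disjoint closed subsets of $Y$ (note $x_0\notin A\setminus G$), so normality of $Y$ gives disjoint open $U'\supseteq A\setminus G$ and $V'\supseteq B$, and $U=U'\cup G$, $V=V'\cap H$ are disjoint open subsets of $X$ containing $A$ and $B$ respectively. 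Hence $X$ is normal, so $T_4$.
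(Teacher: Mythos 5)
Your proposal covers only $T_0$, $T_1$, $T_2$, $T_3$, $T_{3\frac12}$ and the partial $T_4$ statement, but the theorem asserts reconstructibility of $T_i$ for \emph{all} $i \neq 4$, which here includes $T_5$ (complete normality) and $T_6$ (perfect normality); the paper's proof treats these cases and your proposal omits them entirely, so as it stands the proof of the stated theorem is incomplete. Both missing cases are short but do need an argument, and both lean on the partial $T_4$ statement: if every card is $T_5$ (resp.\ $T_6$), then every card is $T_3$ and some card is $T_4$, so $X$ is $T_4$. For $T_5$: if $X$ failed to be $T_5$, some subspace $Y$ would fail to be $T_4$; since $X$ itself is $T_4$, necessarily $Y \subsetneq X$, so $Y$ is a subspace of some card $X \setminus \{x\}$, contradicting that this card is hereditarily normal. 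For $T_6$: if some closed $A$ were not a $G_\delta$ in $X$, then $A \neq X$ (as $X$ is trivially a $G_\delta$), so pick $x \notin A$; in the card $X \setminus \{x\}$, which is $T_6$ and open in $X$ (since $X$ is $T_1$), the set $A$ is a countable intersection of card-open, hence $X$-open, sets -- a contradiction.

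The parts you do prove are correct, and the reduction of reconstructibility to the card characterisation (including the observation that $|Z| \ge 3$ is forced by the cards having at least two points) is handled more carefully than in the paper. For $T_0$, $T_1$, $T_2$ and $T_4$ your arguments essentially coincide with the paper's. For $T_3$ and $T_{3\frac12}$ you take a genuinely different route: the paper deletes a point of the closed set $A$ itself (patching with a Hausdorff separation in the $T_3$ case, and gluing a Urysohn function from a card to the constant $0$ via the pasting lemma in the $T_{3\frac12}$ case), whereas you delete a point $z$ outside $\{x\} \cup F$, insulate $z$ inside an open $O$ with $x \notin \overline{O}$, force the card's Urysohn function to vanish on $F$ together with the card-closure of $O \setminus \{z\}$, and extend by $0$ across $z$. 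This avoids the pasting lemma at the cost of the degenerate case $X = \{x\} \cup F$, which you dispose of correctly via the clopen singleton $\{x\}$; both approaches are sound.
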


\begin{proof} For hereditary axioms $T_i$ ($i \neq 4$) it suffices to prove the converse direction. 

$\mathbf{T_0}$. Let $x \neq y \in X$ and find $z \in X \setminus \Set{x,y}$. Then $x,y \in Y=X \setminus \singleton{z}$ and since $Y$ is $T_0$ there exists an open set $U \subset X$ such that without loss of generality $x \in U \cap Y$ and $y \notin U \cap Y$. In $X$, we consequently have $x \in U$ and $y \notin U$, i.e.\ $X$ is $T_0$.

$\mathbf{T_1}$. We show that for every $x \in X$ the singleton $\singleton{x}$ is closed. Find $ \Set{x, y, z} \in X$. Since both $X \setminus \singleton{y}$ and $X \setminus \singleton{z}$ are $T_1$-spaces by assumption, $\singleton{x}$ is closed in these spaces. Now if $\singleton{x}$ was not closed in $X$, then by definition of the subspace topology the sets $\Set{x,y}$ and $\Set{x,z}$ must be closed in $X$. Thus, $\singleton{x}=\Set{x,y} \cap \Set{x,z}$ is closed, a contradiction.

$\mathbf{T_2}$. Let $x \neq y \in X$ and find $z \in X \setminus \Set{x,y}$. Then $x,y \in Y=X \setminus \singleton{z}$ and since $Y$ is $T_2$ there exist open sets $U,V \subset Y$ such that $x \in U$, $y \in V$ and $U \cap V = \emptyset$. Supposing that every card in our deck is $T_2$ we know by the previous part that $X$ is $T_1$ and hence that $Y$ must be an open subspace. Thus, $U$ and $V$ are open in $X$ as well and therefore $X$ is $T_2$. 


$\mathbf{T_3}$. Let $A \subset X$ be a closed set and let $x \in X \setminus A$. Since the property $T_2$ is reconstructible, it suffices to separate $x$ from $A$ by open sets. Moreover, we may assume $\cardinality{A} \geq 2$. Let $y \in A$ and $Y=X \setminus \singleton{y}$. Then $ \emptyset \neq A \setminus \singleton{y}= A \cap Y$ is a closed subset of $Y$. By assumption, $Y$ is regular, so there exist disjoint open sets $U,V \subset Y$ such that $A \setminus \singleton{y} \subset U$ and $x \in V$. Because $X$ is Hausdorff, the sets $U$ and $V$ are open in X. Further, we can separate $x$ and $y$ by disjoint open sets $M \ni x$ and $N \ni y$. Thus, we can separate $x$ from $A$ by $V \cap M \ni x$ and $U \cup N \supset A$. 

$\mathbf{T_{3\frac12}}$. Let $A \subset X$ be a closed set and let $x \in X \setminus A$. Since $T_{3\frac12}$ implies $T_3$ we know by the previous part of this proof that $X$ must be $T_3$. Hence we can find an open $U \subset X$ such that $x \in U \subset \closure{U} \subset (X \setminus A)$. Fix $y \in X \setminus \closure{U}$. By assumption, we can find a continuous mapping \[f \colon X \setminus \singleton{y} \to [0,1] \; \text{ such that } \; f(x)= 1\; \text{ and } \; f((X \setminus U) \cap (X \setminus \singleton{y})) = 0.\] The functions $g \colon X \setminus U \to \singleton{0}$ and $f|_{\closure{U}} \colon \closure{U} \to [0,1]$ are both continuous, defined on closed subsets of $X$ and coincide on their intersection. By the pasting lemma, the continuous function $h \colon X \to [0,1]$ coinciding with $f$ and $g$ where defined then separates the pair $(A,x)$.

$\mathbf{T_4}.$ Suppose all cards in $\singletonDeletion{X}$ are $T_3$ and $Y=X \setminus \singleton{x}$ is $T_4$. The previous parts show that $X$ is $T_3$. If $X$ was not normal, there existed closed disjoint subsets $A,B \subset X$ which we cannot separate by open sets. If $x \notin A \cup B$, then we can separate $A$ from $B$ by open, disjoint $U,V \subset Y$. As $Y$ is an open subspace, $U$ and $V$ are open in $X$, separating $A$ from $B$. This is impossible, hence without loss of generality we may assume that $x \in A$. This time, we find disjoint open $U,V \subset Y$ separating $A \setminus \singleton{x}$ from $B$. Again, $U$ and $V$ are $X$-open as well. Further, since $X$ is $T_3$, we can find an open $W \subset X$ such that $x \in W \subset \closure{W} \subset X \setminus B$. But now $U \cup W$ and $V \setminus \closure{W}$ separate $A$ and $B$, a contradiction.

$\mathbf{T_{5}}$. Assume that every card in $\singletonDeletion{X}$ is $T_5$, but $X$ is not. Then there exists a subspace $Y \subset X$ which is not $T_4$. The previous part guarantees that $X$ is $T_4$, thus $Y \subsetneq X$. With $x \in X \setminus Y$ the card corresponding to $X \setminus \singleton{x}$ contains the non-$T_4$ subspace $Y$, a contradiction. 

$\mathbf{T_{6}}$. Assume that every card in $\singletonDeletion{X}$ is $T_6$, but $X$ is not. We may assume that $X$ is $T_1$ but there exists some closed subset $A \subsetneq X$ which fails to be a $G_{\delta}$-set. Choose $x \in X \setminus A$, then $Y=X \setminus \singleton{x}$ is $T_6$, so there exists a countable family of open sets $\script{V}$ in $Y$ such that $A = \bigcap{\script{V}}$. But because $X$ was $T_1$, all $V \in \script{V}$ are $X$-open, a contradiction.  
\end{proof}

We remark that our reconstruction result about $T_4$-spaces is sufficient but not necessary. As noted in Theorem \ref{thm:CantorCube} there are compact Hausdorff spaces such that all cards are non-normal. Even more, it is consistent with the usual axioms of set theory ZFC that normality is not reconstructible: we have an example, requiring the Continuum Hypothesis, where normality is non-reconstructible. However, we do not yet know of a ZFC example of a normal space with a non-normal reconstruction.

\begin{question}
Is there a ZFC example showing that normality is non-reconstructible? And under what additional assumptions is normality reconstructible (e.g. is every reconstruction of a separable normal space normal)?
\end{question}

When investigating other topological properties, it is often useful to assume some of the lower separation axioms. Some authors even include those in the definition of the relevant property. If this allows us to obtain certain results, we will freely assume separation axioms up to and including complete regularity. As an example we can easily show that \lq local\rq\ topological properties are reconstructible in $T_1$-spaces:

For a topological property $\mathcal{P}$, we will say that a space $X$ is \lq locally $\mathcal{P}$\rq\ and call \lq locally $\mathcal{P}$\rq\ a local property if and only if every neighbourhood filter has a basis of sets which satisfy $\mathcal{P}$. In other words, $X$ is locally $\mathcal{P}$ if and only if for every $x$ and open $U \ni x$ there is $A \subseteq X$ such that $x \in \interior{A} \subseteq A \subseteq U$ and $A$ is $\mathcal{P}$. Examples of local properties are locally compact, locally connected, locally metrizable (we note that every locally metrizable space is $T_1$) but also zero-dimensionality (in the presence of regularity locally zero-dimensional is equivalent to zero-dimensional and every $T_1$ zero-dimensional space is clearly regular).

\begin{theorem}
\label{localproperty}
In the realm of $T_1$-spaces, local properties are reconstructible.
\end{theorem}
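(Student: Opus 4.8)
The plan is to show that if $X$ is $T_1$ and $\singletonDeletion{X} = \singletonDeletion{Z}$, then $X$ is locally $\mathcal{P}$ if and only if $Z$ is locally $\mathcal{P}$. By symmetry it suffices to prove one direction, so assume every card of $X$ is locally $\mathcal{P}$ and show that $X$ itself is locally $\mathcal{P}$; since $Z$ has the same deck, its cards are likewise locally $\mathcal{P}$, and the same argument applies to $Z$. First I would note that, by Theorem \ref{mythm8} (the $T_1$ case), being $T_1$ is reconstructible, so $Z$ is $T_1$ too; this is what makes the punctured subspaces behave well, since in a $T_1$-space removing a point gives an \emph{open} subspace.

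The core observation is local: fix $x \in X$ and an open neighbourhood $U \ni x$. We must produce $A \subseteq X$ with $x \in \interior{A} \subseteq A \subseteq U$ and $A$ having $\mathcal{P}$. Choose any point $y \in X$ with $y \neq x$; if $\cardinality{X} \le 1$ the statement is trivial, and if $\cardinality{X} = 2$ one checks the two-point cases directly, so assume we can pick such a $y$, and moreover — shrinking $U$ if necessary using $T_1$ — we may assume $y \notin U$. Then $Y = X \setminus \singleton{y}$ is a card, hence locally $\mathcal{P}$, and $U \subseteq Y$ is open in $Y$ with $x \in U$. Applying the definition of ``locally $\mathcal{P}$'' inside $Y$ yields $A \subseteq Y$ with $x \in \interior{A}^{Y} \subseteq A \subseteq U$ and $A$ satisfying $\mathcal{P}$ (as a subspace of $Y$, equivalently as a subspace of $X$). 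Because $X$ is $T_1$, $Y$ is open in $X$, so the interior of $A$ computed in $Y$ agrees with the interior computed in $X$; hence $x \in \interior{A} \subseteq A \subseteq U$ in $X$, and $X$ is locally $\mathcal{P}$ at $x$. As $x$ was arbitrary, $X$ is locally $\mathcal{P}$.

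The step I expect to require the most care is the bookkeeping around small cardinalities and the shrinking of $U$: one needs a point $y$ outside the chosen neighbourhood, which in a $T_1$-space is arranged by replacing $U$ with $U \setminus \singleton{y}$ for any $y \neq x$, and then noting $U \setminus \singleton{y}$ is still an open neighbourhood of $x$ and still contained in the original $U$ — so it suffices to find $A$ inside this smaller set. The genuinely essential ingredient, used twice, is that $T_1$ makes single-point deletions open, so that ``$\mathcal{P}$'' and ``interior'' are insensitive to whether we work in $X$ or in the card $Y$; everything else is the routine transfer of a purely local condition from an open subspace back to the whole space.
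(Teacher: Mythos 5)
Your proposal is correct and is essentially the paper's own argument: since $X$ (and, via Theorem \ref{mythm8}, any reconstruction) is $T_1$, punctured subspaces are open, so a neighbourhood basis of $x$ in a card $X\setminus\{y\}$ with $y\neq x$ is a neighbourhood basis of $x$ in $X$, which is exactly the paper's two-line proof; your shrinking of $U$ and the small-cardinality checks are harmless extra care. The only step you leave implicit --- that a locally $\mathcal{P}$ space has all its cards locally $\mathcal{P}$, because local properties are hereditary to open subspaces, which is what lets you pass from ``$X$ is locally $\mathcal{P}$'' to ``all cards of $Z$ are locally $\mathcal{P}$'' --- is trivial and is likewise dispatched in a single sentence in the paper.
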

\begin{proof} 
We prove that for a topological property $\mathcal{P}$, $X$ is locally $\mathcal{P}$ if and only if all cards are locally $\mathcal{P}$. For the direct implication, note that a local property is hereditary with respect to open subspaces. Conversely, take an arbitrary $x \in X$ and choose $y \neq x$. Now note that a neighbourhood basis of $x$ in the card $X \setminus \Set{y}$ is a neighbourhood basis of $x$ in $X$.
\end{proof}

\section{Cardinal invariants}\label{section:cardinalInvariants}
 It is clear that we can reconstruct the size of a space from its deck, as every card has exactly one element less than the original space. Other cardinal invariants are reconstructible as well. We focus on those we need for other results. Following \cite{Engelking}, we assume all cardinal invariants other than the size of a space to have infinite values only.

\begin{theorem}
\label{mythm4}
The weight $w(X)$ of a topological space $X$ is reconstructible. 
\end{theorem}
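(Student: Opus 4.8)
The plan is to reconstruct $w(X)$ from the weights of the cards. Write $\kappa = \sup\set{w(Y)}:{Y \in \singletonDeletion{X}}$ — equivalently, since weight is monotone under the deletions, $\kappa = w(X \setminus \singleton{x})$ for any single $x$, as all cards have the same weight once $\cardinality{X} \geq 3$ (any two points lie in a common card). The claim will be that $w(X) = \kappa$. One inequality is immediate: each card is a subspace of $X$, and weight is monotone under subspaces, so $w(Y) \leq w(X)$ for every card $Y$, whence $\kappa \leq w(X)$.

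For the reverse inequality $w(X) \leq \kappa$, fix any point $x_0 \in X$ and a card $Y_0 = X \setminus \singleton{x_0}$ with a base $\script{B}_0$ of size $\leq \kappa$. Also fix a second point $x_1 \neq x_0$ and a base $\script{B}_1$ of size $\leq \kappa$ for $Y_1 = X \setminus \singleton{x_1}$. Every element of $\script{B}_0$ is of the form $U \cap Y_0$ for some $X$-open $U$, and similarly for $\script{B}_1$. First I would argue that the open subsets of $X$ not containing $x_0$ are exactly the open subsets of $Y_0$ (in its subspace topology), so $\script{B}_0$ already gives a base at every point of $X \setminus \singleton{x_0}$ for neighbourhoods avoiding $x_0$; symmetrically $\script{B}_1$ handles points $\neq x_1$. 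The only thing missing is a neighbourhood base at $x_0$ itself. To get one of size $\leq \kappa$, pick a third point $x_2$ (using $\cardinality{X} \geq 3$) and a base $\script{B}_2$ of size $\leq \kappa$ for $Y_2 = X \setminus \singleton{x_2}$; since $x_0 \in Y_2$, the sets in $\script{B}_2$ containing $x_0$ form a neighbourhood base at $x_0$ in $Y_2$, and each such set, intersected with a suitable neighbourhood from $\script{B}_0$ or $\script{B}_1$ separating $x_0$ from the "missing" point, yields an $X$-open neighbourhood base at $x_0$ — though I should be careful here, since $x_0$ may have no small neighbourhood base at all in the absence of any separation.

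In fact the clean approach avoids worrying about $x_0$ separately: take three cards $Y_0, Y_1, Y_2$ obtained by deleting three distinct points $x_0, x_1, x_2$, each with a base $\script{B}_i$ of size $\leq \kappa$. For each $i$, let $\script{C}_i = \set{U \subseteq X \text{ open}}:{U \cap Y_i \in \script{B}_i,\ x_i \notin U}$ — i.e., promote the members of $\script{B}_i$ that avoid $x_i$ to genuine $X$-open sets (which is automatic, as open subsets of $X$ missing $x_i$ coincide with open subsets of $Y_i$). Then $\script{C} = \script{C}_0 \cup \script{C}_1 \cup \script{C}_2$ has size $\leq \kappa$, and I claim it is a base for $X$: given $x \in X$ and open $W \ni x$, choose $i$ with $x_i \neq x$; then $W \cap Y_i$ is $Y_i$-open containing $x$, so some $B \in \script{B}_i$ has $x \in B \subseteq W \cap Y_i$, and shrinking $B$ within $Y_i$ to also avoid $x_i$ (possible since $\script{B}_i$ is a base of $Y_i$ and $x \neq x_i$, so some basic set separates them — wait, this needs $Y_i$ to be at least $T_1$ or we need a uniform trick).

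**The main obstacle**, as the parenthetical hesitations above indicate, is handling spaces with no separation axioms: a basic open set $B \in \script{B}_i$ containing $x$ need not be shrinkable to avoid another prescribed point $x_i$, so the naive "promote the sets avoiding $x_i$" step may discard too much. The fix is to intersect two cards: use $\script{C}_0' = \set{U \cap V}:{U \in \script{B}_0,\ V \in \script{B}_1,\ x_0, x_1 \notin U \cap V}$ together with analogous families, or more simply observe that for $x \neq x_i$, the trace on $Y_i$ of a neighbourhood of $x$ from the card $Y_j$ ($j \neq i$, $x \neq x_j$) is already $X$-open and can be intersected with a member of $\script{B}_i$ to produce an $X$-open refinement. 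I expect this bookkeeping — choosing, for each $x$, the right pair of cards and intersecting their basic sets to simultaneously localize at $x$ and stay $X$-open — to be the only delicate point; everything else is the monotonicity inequality and elementary manipulation of subspace topologies.
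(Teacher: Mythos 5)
Your overall strategy (show $w(X)=\sup\{w(Y):Y\in\singletonDeletion{X}\}$; the inequality $\geq$ by monotonicity, the inequality $\leq$ by assembling a base for $X$ from bases of finitely many cards) is the same as the paper's, and the monotonicity half is fine. But the second half has a genuine gap, and it sits exactly at the point you flag but do not resolve. Your ``clean approach'' rests on the claim that ``open subsets of $X$ missing $x_i$ coincide with open subsets of $Y_i$''. Only one inclusion is true: an $X$-open set missing $x_i$ is $Y_i$-open, but a $Y_i$-open set need not be $X$-open (take $X=\{x_i,a,b\}$ with open sets $\emptyset,\{a\},\{a,x_i\},X$: then $Y_i=\{a,b\}$ is $Y_i$-open but not $X$-open; or take any indiscrete space). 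So your families $\mathcal{C}_i$ may be far too small — in the three-point indiscrete space they are empty — and the proposed repairs inherit the same error: for $U\in\mathcal{B}_0$, $V\in\mathcal{B}_1$ the set $U\cap V$ is only open in $X\setminus\{x_0,x_1\}$, not in $X$, and likewise the ``trace on $Y_i$ of a neighbourhood from $Y_j$'' is only open in $X\setminus\{x_i,x_j\}$. (A smaller slip: your parenthetical claim that all cards have the same weight is also false — e.g.\ for the one-point Lindel\"ofication of a discrete space of size $\aleph_\omega$, deleting the non-isolated point can drop the weight — but this is not load-bearing, since you only use the supremum.)

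The paper's proof goes in the opposite direction to your ``avoid $x_i$'' idea: instead of discarding or shrinking basic sets, it \emph{enlarges} them. For $U\in\mathcal{B}_i$ put $\tilde U=U$ if $U$ is $X$-open and $\tilde U=U\cup\{x_i\}$ otherwise (in the latter case $U\cup\{x_i\}$ is automatically $X$-open, being the only $X$-open set with trace $U$), and one checks that $\tilde{\mathcal{B}}_1\cup\tilde{\mathcal{B}}_2$ from just two cards is a base for $X$: given $x\in W$ with $W$ open, pick $i$ with $x\neq x_i$ and $U\in\mathcal{B}_i$ with $x\in U\subseteq W\cap Y_i$; if $x_i\in W$ then $\tilde U\subseteq W$ in either case, while if $x_i\notin W$ then $U$ must already be $X$-open, since otherwise some point of $U$ has $x_i$ in every one of its neighbourhoods, contradicting $U\subseteq W$ and $x_i\notin W$. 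This last observation is precisely the device your write-up is missing; without it (or something equivalent) the construction of a small base for $X$ from bases of cards does not go through in the absence of separation axioms.
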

\begin{proof}
We claim that $w\p{X}= \sup\set{w \p{Y}}:{Y \in \singletonDeletion{X}}$.

``$\geq$": This follows from the definition of the subspace topology.

``$\leq$": 
We show that if $w\p{Y} < \mathfrak{m}$ for all $Y \in \singletonDeletion{X}$ then $w\p{X} < \mathfrak{m}$. Choose $x_1 \neq x_2 \in X$ and let $Y_i=X \setminus \singleton{x_i}$. By assumption, there exist bases $\script{B}_i$ of $Y_i$ of cardinality less than $ \mathfrak{m}$. 
For $U \in \script{B}_i$ we define $\tilde{U}$ to be the open set it comes from, i.e. $\tilde{U}=U$ if $U$ was $X$-open and $\tilde{U}=U \cup \singleton{x_i}$ otherwise. Let $\tilde{\script{B}}_i=\set{\tilde{U}}:{U \in \script{B}_i}$, then $\script{B}=\tilde{\script{B}}_1 \cup \tilde{\script{B}}_2$ is a base of $X$ with $\cardinality{\script{B}}<\mathfrak{m}$.
\end{proof}

\begin{theorem}
\label{mythm7}
The density $d(X)$ of a topological space $X$ is reconstructible.
\end{theorem}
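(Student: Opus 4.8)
The plan is to mimic the proof for weight given just above, establishing the formula $d(X) = \sup\set{d(Y)}:{Y \in \singletonDeletion{X}}$, or at least the two inequalities needed to conclude reconstructibility. The ``$\geq$'' direction is immediate: if $D \subseteq X$ is dense with $\cardinality{D} = d(X)$, then for any card $Y = X \setminus \singleton{x}$ the set $D \cap Y$ (or $D$ itself if $x \notin D$) is dense in $Y$, so $d(Y) \leq d(X)$; hence the supremum over cards is at most $d(X)$. Note this uses no separation hypothesis and relies only on the fact that the intersection of a dense set with an open subspace is dense in that subspace, together with the convention that cardinal invariants are taken to be at least $\aleph_0$ (so deleting one point from an infinite dense set does not matter).

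For the reverse inequality ``$\leq$'', I would argue as in Theorem \ref{mythm4}: suppose $d(Y) < \mathfrak{m}$ for every card $Y$, and pick two distinct points $x_1, x_2 \in X$, setting $Y_i = X \setminus \singleton{x_i}$. Choose dense sets $D_i \subseteq Y_i$ with $\cardinality{D_i} < \mathfrak{m}$. The natural candidate for a dense subset of $X$ is $D = D_1 \cup D_2$, which has cardinality less than $\mathfrak{m}$. To see $D$ is dense in $X$, take a nonempty open $U \subseteq X$. If $U$ meets $Y_1$ in a nonempty set, then $U \cap Y_1$ is open in $Y_1$ and so meets $D_1$, hence $U$ meets $D$; the only way $U$ can fail to meet $Y_1$ is if $U \subseteq \singleton{x_1}$, i.e.\ $U = \singleton{x_1}$, which forces $x_1$ to be isolated — and then symmetrically $U = \singleton{x_1}$ does meet $Y_2$ (since $\cardinality{X} \geq 3 > 1$, so $x_1 \in Y_2$), so $U$ meets $D_2 \subseteq D$ provided $x_1 \in \closure{D_2}^{Y_2}$. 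That last clause is the subtle point: an isolated point of $X$ need not lie in the closure of $D_2$ inside $Y_2$.

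The main obstacle, then, is exactly the isolated points. I would handle it by the following refinement: if $x_1$ is isolated in $X$, then $\singleton{x_1}$ is a nonempty open subset of $X$, hence (being nonempty) the whole of $X$ cannot be covered by a discussion that ignores it — more usefully, $x_1$ is then an isolated point of $Y_2$ as well, so $x_1$ must belong to every dense subset of $Y_2$; in particular $x_1 \in D_2$. Thus whenever $x_i$ is isolated we may, after enlarging $D_{3-i}$ by the single point $x_i$ (which does not change its cardinality, as it is infinite), assume $x_i \in D_{3-i}$, and the density argument above goes through: every nonempty open $U$ either meets some $Y_i$ nontrivially and hence meets $D_i$, or equals $\singleton{x_i}$ for an isolated $x_i$, in which case $x_i \in D_{3-i} \subseteq D$. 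This shows $d(X) < \mathfrak{m}$ and completes the proof. I expect the verification that each $D_i$ can be taken to contain the relevant isolated points, and the case analysis on nonempty open sets, to be the only genuinely non-routine parts; everything else parallels Theorem \ref{mythm4} verbatim.
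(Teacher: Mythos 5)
Your ``$\leq$'' direction (the union $D_1\cup D_2$ of dense sets from two cards, with the isolated-point repair) is fine and needs no separation axiom, but your ``$\geq$'' direction is wrong in general, and with it the formula $d(X)=\sup\{d(Y)\colon Y\in\singletonDeletion{X}\}$ that your whole argument rests on. The theorem is stated for arbitrary topological spaces, so a card $X\setminus\singleton{x}$ need not be an open subspace ($\singleton{x}$ need not be closed), and density -- unlike weight -- is not monotone under passing to non-open subspaces. Concretely, let $X$ be an uncountable set with the particular point topology at $p$ (open sets are those containing $p$, plus $\emptyset$): then $\singleton{p}$ is dense, so $d(X)=\aleph_0$, while the card $X\setminus\singleton{p}$ is uncountable and discrete, so its density is $\cardinality{X}$. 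Thus the supremum of the card densities can strictly exceed $d(X)$; the trace $D\cap Y$ of a dense set on a card can even be empty. Your remark that the step ``uses no separation hypothesis'' is exactly where the error hides: it silently assumes the card is open, i.e.\ essentially $T_1$. (Under $T_1$ your proof is correct, and in fact then every card has the same density as $X$, but no such hypothesis is available here.)

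The paper avoids this by proving the \emph{infimum} formula $d(X)=\inf\{d(Y)\colon Y\in\singletonDeletion{X}\}$, both directions of which hold without any separation: if $A$ is dense in a card $Y=X\setminus\singleton{x}$, then $A\cup\singleton{x}$ is dense in $X$, giving $d(X)\leq d(Y)$ for every card; and taking a dense $A\subseteq X$ of minimal size with $A\neq X$ (the case $A=X$ being immediate) and deleting a point $x\notin A$, one has $\closureIn{Y}{A}=\closureIn{X}{A}\intersect Y=Y$, so some card satisfies $d(Y)\leq d(X)$. This is the structural point you missed by copying the weight argument verbatim: weight is hereditary, so the sup formula is the right one there, whereas for density only the inf formula survives in the absence of $T_1$. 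To repair your write-up, replace the sup claim by the inf claim; your $D_1\cup D_2$ argument then becomes unnecessary.
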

\begin{proof}
We claim that $d(X)= \inf\set{d(Y)}:{Y \in \singletonDeletion{X}}$.

``$\leq$": Let $Y \in \singletonDeletion{X}$ i.e. $Y= X \setminus \singleton{x}$ for some $x \in X$ and let $A \subset Y$ be a dense subset, i.e. $\closureIn{Y}{A}=Y$. We then have $\closureIn{X}{A \cup \singleton{x}}=X$, yielding $d(X) \leq d(Y)$.

``$\geq$": Let $A \subset X$ be a dense subset. We may assume that $A \neq X$ as otherwise the claim is immediate. Now choose $x \notin A$ and let $Y = X \setminus \singleton{x}$. But then $\closureIn{Y}{A} = \closureIn{X}{A} \intersect Y = Y$.
\end{proof}

With similar methods one can, for example, show that the character, the cellularity and the spread of a space are reconstructible. 

We now prove two theorems about the number of isolated points. We will use them frequently in the following.
\begin{theorem}
\label{mythm18}
In the realm of $T_1$-spaces, the number $i(X)$ of isolated points is reconstructible. In particular, if no card of a space $X$ has an isolated point then $X$ has none either.
\end{theorem}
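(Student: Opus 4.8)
The plan is to exhibit $i(X)$ as a function of the deck $\singletonDeletion{X}$ alone, in the same $\sup/\inf$-over-the-deck style as Theorems \ref{mythm4} and \ref{mythm7}, using the fact that in a $T_1$ space deleting a point neither creates nor destroys isolated points elsewhere. The key lemma is: if $X$ is $T_1$ and $x \neq y$ are points of $X$, then $y$ is isolated in $X \setminus \singleton{x}$ if and only if $y$ is isolated in $X$. The forward implication is trivial, since being isolated passes to open subspaces. For the reverse, if $\singleton{y} = U \intersect (X \setminus \singleton{x})$ for some $X$-open $U$, then $U \subseteq \Set{x,y}$, so either $U = \singleton{y}$, and we are done, or $U = \Set{x,y}$; in the latter case $\singleton{x}$ is closed because $X$ is $T_1$, so $\singleton{y} = \Set{x,y} \intersect (X \setminus \singleton{x})$ is open in $X$. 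This is the only place the $T_1$ hypothesis is used.

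From the lemma, writing $\mathrm{Iso}(W)$ for the set of isolated points of $W$, one gets $\mathrm{Iso}(X \setminus \singleton{x}) = \mathrm{Iso}(X)$ when $x$ is not isolated and $\mathrm{Iso}(X \setminus \singleton{x}) = \mathrm{Iso}(X) \setminus \singleton{x}$ when $x$ is isolated; hence every card $Y \in \singletonDeletion{X}$ satisfies $i(Y) = i(X)$, or $i(Y)$ is ``$i(X)$ minus one'' (which, when $i(X)$ is infinite, is again $i(X)$). In particular, if $X$ has no isolated point then no card does, and conversely if $X$ is $T_1$ with at least two points and some $x \in X$ is isolated, then deleting any other point leaves $x$ isolated, so some card has an isolated point. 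This already settles the ``in particular'' clause — the form we use in later sections — namely that ``having no isolated point'' is a deck-property.

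For the invariant itself, suppose $\singletonDeletion{X} = \singletonDeletion{Z}$ with $X, Z$ both $T_1$ (the degenerate cases $\cardinality{X} \le 1$ aside, and noting $T_1$-ness forces discreteness when $\cardinality{X} \le 2$). I claim $i(X) \in \set{i(Y)}:{Y \in \singletonDeletion{X}}$ unless $X$ is finite and discrete: if $X$ has a non-isolated point, deleting it yields a card with exactly $i(X)$ isolated points, while if $X$ has no non-isolated point it is discrete, and if such an $X$ is moreover infinite then ``$i(X)$ minus one'' equals $i(X)$ and still occurs. Combined with the previous paragraph, $i(X) = \sup \set{i(Y)}:{Y \in \singletonDeletion{X}}$ whenever $X$ is not finite-discrete, a quantity depending on the deck only, so $i(X) = i(Z)$ in this case. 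It remains to treat $\singletonDeletion{X} = \Set{\discrete{n}}$ for finite $n$: here one checks that any $T_1$ space $W$ with $\singletonDeletion{W} = \Set{\discrete{n}}$ is discrete — for distinct $w, w_1, w_2 \in W$, discreteness of each card $W \setminus \singleton{w_i}$ forces $\singleton{w}$ or $\Set{w,w_i}$ to be $W$-open, and $\Set{w,w_1}$, $\Set{w,w_2}$ cannot both be open lest their intersection $\singleton{w}$ be open too, so in either event $\singleton{w}$ is open — hence $i(W) = \cardinality{W} = n+1$, and in particular $i(X) = n+1 = i(Z)$.

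The step I expect to be the main obstacle is precisely this last one: the clean ``$\sup$ over the deck'' formula undercounts by exactly one for finite discrete spaces, where deleting a (necessarily isolated) point genuinely lowers the count, so one must argue separately that a deck of the form $\Set{\discrete{n}}$ pins the space down, among $T_1$ spaces, up to homeomorphism and hence pins down $i$.
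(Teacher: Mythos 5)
Your proposal is correct and follows essentially the same route as the paper: the same $T_1$ key fact (isolated points of a card are isolated in $X$), the same bounds $i(X)-1 \le i(Y) \le i(X)$, and the same recovery of $i(X)$ from the deck with the finite discrete space as the lone exceptional case. Your explicit argument that a deck equal to $\Set{\discrete{n}}$ forces the space to be $\discrete{n+1}$ is just a spelled-out version of the paper's ``all cards have $i(Y)=n>0$, hence $i(X)=n+1$'' step.
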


\begin{proof}
Since $X$ is a $T_1$-space, any isolated point of a card is isolated in $ X$, too. It follows $i(Y) \leq i (X)$. Moreover, since we delete at most one point, we have that $i(X)-1 \leq i(Y) \leq i(X)$ for all $Y \in \singletonDeletion{X}$. If we are dealing with infinitely many isolated points, we are clearly finished. Supposing that all cards have $i(Y) < \infty$, we encounter two cases. If there are $Y_1,Y_2 \in \singletonDeletion{X}$ with $i(Y_1) \neq i(Y_2)$ then $i(X)=\max\Set{i(Y)}$. Otherwise, if for all $Y \in \singletonDeletion{X}$ we have $i(Y)=n$ then it is not hard to see that $i(X) = 0 \text{ if } n=0$ and $i(X)=n+1 \text{ if } n>0$.
\end{proof}

\begin{theorem}
\label{mythm19}
Suppose that $X$ is a $T_1$-space with $\cardinality{X} \geq 3$ and a positive but finite number of isolated points. Then $X$ is reconstructible.
\end{theorem}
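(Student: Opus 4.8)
The plan is to combine the two invariants we can already reconstruct — the number of isolated points (Theorem \ref{mythm18}) and the size of the space — with a direct inspection of what the deck of an isolated-point-rich space can look like. Write $X = X' \oplus \discrete{n}$ is \emph{not} what one should expect in general, so instead I would argue as follows. Since $X$ is $T_1$ with $\cardinality{X}\geq 3$, Theorem \ref{mythm18} tells us that any reconstruction $Z$ of $X$ is also $T_1$ (this is Theorem \ref{mythm8}, the $T_1$ case) and has the same finite positive number $k = i(X) = i(Z)$ of isolated points, and the same cardinality. Let $\{p_1,\dots,p_k\}$ be the isolated points of $X$. Deleting an isolated point $p_j$ gives a card $X\setminus\{p_j\}$ which is $T_1$ with exactly $k-1$ isolated points; deleting a non-isolated point $x$ gives a card with exactly $k$ isolated points (using $T_1$, deleting $x$ cannot create a new isolated point, nor destroy one since $p_i \neq x$). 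So the deck splits cleanly: the cards with $k-1$ isolated points are exactly those of the form $X\setminus\{p_j\}$, and the cards with $k$ isolated points are exactly those of the form $X\setminus\{x\}$ for $x$ non-isolated.

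Next I would recover the ``isolated part'' rigidly. Because $k\geq 1$, pick any isolated point $p_1$ and consider the card $Y = X\setminus\{p_1\}$; this card \emph{is} in the deck, hence also in the deck of $Z$, so $Z$ has a point $q$ with $Z\setminus\{q\} \cong Y$. The point $q$ must be isolated in $Z$: if it were not, then $Z\setminus\{q\}$ would have $k$ isolated points, but $Y$ has only $k-1$. So $Z\setminus\{q\} \cong X\setminus\{p_1\}$ with $q$, $p_1$ isolated in $Z$, $X$ respectively. Now I claim $X \cong Z$. Observe that for \emph{any} $T_1$ space $W$ with at least one isolated point $p$, the space $W$ is recovered from $W\setminus\{p\}$ together with the datum ``$p$ is isolated'': namely $W \cong (W\setminus\{p\}) \oplus \discrete{1}$ if and only if... — here is the subtlety. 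Adding an isolated point back is unambiguous \emph{as an abstract operation}: if $W_1\setminus\{p_1\} \cong W_2\setminus\{p_2\}$ with $p_1, p_2$ isolated in $W_1, W_2$, then $W_1 \cong W_2$, because each $W_i$ is homeomorphic to $(W_i\setminus\{p_i\})\,\oplus\,\discrete{1}$ and the right-hand sides are homeomorphic. Applying this with $W_1 = X$, $p_1$, $W_2 = Z$, $q$ gives $X \cong Z$, completing the proof.

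The one place requiring care — and what I expect to be the main obstacle — is the claim that a $T_1$ space with an isolated point $p$ is homeomorphic to $(W\setminus\{p\})\oplus\discrete{1}$. This is true precisely because $\{p\}$ is clopen: it is open since $p$ is isolated, and closed since $W$ is $T_1$ and... no, $T_1$ alone does not make $\{p\}$ closed unless points are closed, which they are in $T_1$ spaces. So $\{p\}$ is clopen, hence $W = \{p\} \sqcup (W\setminus\{p\})$ is a topological disjoint sum, giving $W \cong (W\setminus\{p\})\oplus\discrete{1}$. This is clean. The only residual worry is degenerate bookkeeping when $k=1$: then $X\setminus\{p_1\}$ has $0$ isolated points, and we must make sure Theorem \ref{mythm18}'s case analysis still pins $i(Z)=1$ rather than leaving $i(Z)=0$ open. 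But if every card of $Z$ that arises by deleting a non-isolated point has $k=1$ isolated point while one card ($X\setminus\{p_1\}$) has $0$, then the cards are not all equal, so by the "$i(Y_1)\neq i(Y_2)$" case of Theorem \ref{mythm18} we get $i(Z)=\max = 1$; and the card with $0$ isolated points must come from deleting an isolated point of $Z$ — reproducing the argument above. So the structure goes through uniformly, and $X$ is reconstructible.
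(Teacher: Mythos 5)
Your proof is correct and follows essentially the same route as the paper: locate the card $Y$ with $i(Y)=i(X)-1$, use Theorem \ref{mythm18} to see that in any reconstruction $Z$ this card must arise by deleting an isolated point, and conclude $X \cong Y \oplus \discrete{1} \cong Z$. The extra details you supply (the clopen-singleton decomposition in $T_1$ spaces and the $k=1$ bookkeeping) are just the steps the paper leaves implicit.
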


\begin{proof}
Let $X$ be a $T_1$-space with finitely many isolated points and let $Z$ be a reconstruction of $X$. There exists a card $Y \in \singletonDeletion{X}$ with $i(X)-1=i(Y)=i(Z)-1$. But this means that the card $Y$ was obtained by deleting an isolated point. Hence $X \cong Y \oplus \discrete{1} \cong Z$, showing that $X$ is reconstructible.
\end{proof}

\section{Classes of reconstructible spaces}\label{section:StoneCech}
We start with an observation which proves that compact Hausdorff spaces containing an isolated point are reconstructible. As an immediate corollary we obtain that spaces such as the Tychonov plank $T$ or the converging sequence $\omega +1$ are reconstructible.

\begin{theorem}
\label{mythm20}
Suppose $X$ is a Hausdorff space with one compact card $Y$. Then $X$ is reconstructible.
\end{theorem}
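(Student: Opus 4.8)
The plan is to pin down the structure that the hypotheses force on $X$, and then show that any reconstruction is forced into the identical structure. Write $Y = X \setminus \singleton{x_0}$ for the given compact card. Since $X$ is Hausdorff it is $T_1$, so $\singleton{x_0}$ is closed and hence $Y$ is open in $X$; and since $Y$ is a compact subset of the Hausdorff space $X$, it is also closed in $X$. Thus $Y$ is clopen, $x_0$ is an isolated point, and $X \cong Y \disjointSum \discrete{1}$. In particular $X$ is compact.

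Now let $Z$ be a reconstruction of $X$, i.e.\ $\singletonDeletion{Z} = \singletonDeletion{X}$; the goal is $Z \cong X$. We may assume $\cardinality{X} \geq 3$, as in Theorem~\ref{mythm8} (the cases $\cardinality{X} \leq 2$ are checked directly, and for $\cardinality{X}=2$ the statement must be read with that convention, since the discrete two-point space shares its deck with the Sierpi\'nski space). Every card of $X$ is a subspace of a Hausdorff space, hence Hausdorff, so every card of $Z$ is Hausdorff, and therefore $Z$ is Hausdorff by Theorem~\ref{mythm8}. Moreover $[Y]_\sim \in \singletonDeletion{X} = \singletonDeletion{Z}$, so there is a point $z_0 \in Z$ with $Z \setminus \singleton{z_0} \cong Y$, a compact card of $Z$. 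Repeating the argument of the first paragraph with $Z$ in place of $X$: the compact card $Z \setminus \singleton{z_0}$ is clopen in the Hausdorff space $Z$, so $Z \cong \p{Z \setminus \singleton{z_0}} \disjointSum \discrete{1} \cong Y \disjointSum \discrete{1} \cong X$. Hence $X$ is reconstructible.

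The argument is short once one observes that a compact card of a Hausdorff space must be clopen, so such an $X$ splits as a compact space plus an isolated point. The single point that genuinely needs care is that a reconstruction $Z$ is not a priori Hausdorff — and for a space that is merely $T_1$ (or less) the implication ``compact subspace $\Rightarrow$ closed subspace'' can fail, which would break the splitting step for $Z$. This is exactly why the proof routes through the reconstructibility of the $T_2$ axiom (Theorem~\ref{mythm8}); no heavier machinery appears to be needed.
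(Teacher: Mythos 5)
Your argument is correct and takes essentially the same route as the paper: a compact card of a Hausdorff space is closed, so the deleted point is isolated and $X \cong Y \disjointSum \discrete{1}$, and the same splitting applies to any reconstruction $Z$ once Hausdorffness of $Z$ is obtained from Theorem~\ref{mythm8}. You merely spell out two points the paper leaves implicit, namely the transfer of $T_2$ to $Z$ and the small-cardinality caveat ($\cardinality{X}\geq 3$) that Theorem~\ref{mythm8} requires.
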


\begin{proof}
	Let $Y = X \setminus \singleton{x}$ be a compact subspace of $X$. Because in Hausdorff spaces compact subspaces are closed, we know that $Y$ was obtained by deleting an isolated point. Thus $X \cong Y \oplus \discrete{1}$ is reconstructible.
\end{proof}

\begin{theorem}
\label{compactreconstruction} \label{thm:allReconstructionsCompact}
If $X$ is a compact $T_2$-space such that every reconstruction of $X$ is compact then $X$ is reconstructible.
\end{theorem}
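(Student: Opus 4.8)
The plan is to show that a compact $T_2$-space $X$ all of whose reconstructions are compact is determined by its deck. So suppose $Z$ is a reconstruction of $X$, i.e.\ $\singletonDeletion{Z} = \singletonDeletion{X}$. By hypothesis $Z$ is compact. Since compactness passes to cards only after one-point-compactifying, the first observation I would make is that $Z$ is Hausdorff: by Theorem \ref{mythm8} the property $T_2$ is reconstructible (here $\cardinality{X} \geq 3$, which we may assume since the small cases are trivial), so $Z$ inherits $T_2$ from $X$. Thus both $X$ and $Z$ are compact Hausdorff spaces with the same deck.

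The core idea is to recover each space from a single card by one-point compactification. Pick any point $x \in X$ and set $Y = X \setminus \singleton{x}$; this $Y$ is a card. If $x$ is isolated in $X$ then $Y$ is compact, and Theorem \ref{mythm20} already gives that $X$ (and hence $Z$, having a compact card) is reconstructible, so assume no card is compact — equivalently, by the previous line, that $X$ has no isolated point, and likewise $Z$ has none (since $Z$ is compact Hausdorff with no compact card). Now $Y = X \setminus \singleton{x}$ is a locally compact, non-compact Hausdorff space, and because $x$ is a non-isolated point of the compact Hausdorff space $X$, the space $X$ is precisely the Alexandroff one-point compactification $\omega Y$ of $Y$: indeed $\closure{Y} = X$ and the neighbourhood filter of $x$ in $X$ is exactly the filter of complements of compact subsets of $Y$, which is the defining property of $\omega Y$. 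Symmetrically, every non-isolated point $z$ of $Z$ gives $Z \cong \omega(Z \setminus \singleton{z})$. Since the decks agree, the card $Y \cong X \setminus \singleton{x}$ also occurs as $Z \setminus \singleton{z}$ for some $z \in Z$; as $Z$ has no isolated points this $z$ is non-isolated, so $Z \cong \omega(Z \setminus \singleton{z}) \cong \omega Y \cong X$. Hence $X$ is reconstructible.

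The step I expect to be the genuine obstacle is the claim that a compact Hausdorff space $X$ with a non-isolated point $x$ is homeomorphic to the Alexandroff compactification of $X \setminus \singleton{x}$, together with the uniqueness of such a compactification. The uniqueness part is standard (the one-point compactification of a locally compact Hausdorff space is unique up to a homeomorphism fixing the original space), but one must be careful that $X \setminus \singleton{x}$ is genuinely locally compact — which holds because an open subspace of a compact Hausdorff, hence locally compact Hausdorff, space is locally compact — and that the point being removed is non-isolated so that the added point is not isolated in $\omega Y$ either, matching $x$ in $X$. The other mild subtlety is disposing of the degenerate cases ($\cardinality{X} \le 2$, or $X$ having an isolated point) cleanly at the outset via Theorems \ref{mythm20} and the $T_2$-reconstruction result, so that in the main argument both $X$ and $Z$ are crowded compact Hausdorff spaces and the one-point-compactification reconstruction goes through verbatim on both sides.
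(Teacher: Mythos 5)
Your proposal is correct and follows essentially the same route as the paper: dispose of the isolated-point case via Theorem \ref{mythm20}, and otherwise observe that $X$ and every (compact, Hausdorff by Theorem \ref{mythm8}) reconstruction $Z$ must each be the one-point compactification of any of their cards, so a common card forces $X \cong Z$. You merely spell out in detail the uniqueness-of-Alexandroff-compactification step and the absence of isolated points in $Z$, which the paper leaves implicit.
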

\begin{proof}
If $X$ contains an isolated point, the claim follows from Theorem \ref{mythm20}. In the other case, the claim follows from the fact that every reconstruction must be the 1-point compactifications of some (and in fact every) card.
\end{proof}

This theorem explains why reconstructing compactness is so hard: in Hausdorff spaces reconstructing compactness is in fact equivalent to reconstructing the space itself.

We now prove a pair of theorems, showing that certain \lq maximal\rq\ compactifications are reconstructible:

\begin{theorem}\label{mythm21}\label{thm:maximalFiniteCompactification}
Suppose $X$ is a compact Hausdorff space. If there is a space $Y$ such that $X$ is the maximal finite-point compactification of $Y$ (i.e. $X$ is a compactification of $Y$ with finite remainder of size $n$ and there is no $n+1$-point compactification of $Y$), then $X$ is reconstructible. In particular, if one card of $X$ does not have a 2-point compactification then $X$ is reconstructible.
\end{theorem}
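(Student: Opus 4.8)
The plan is to exploit Theorem \ref{thm:allReconstructionsCompact}: since $X$ is already compact Hausdorff, it suffices to show that every reconstruction $Z$ of $X$ is compact. So suppose, for contradiction, that $Z$ is a non-compact reconstruction with $\singletonDeletion{Z} = \singletonDeletion{X}$. First I would record what the deck of $X$ looks like: since $X$ is a compactification of $Y$ with remainder $R$ of size $n$, deleting a point $r \in R$ yields a compactification of $Y$ with remainder of size $n-1$, while deleting a point $y \in Y$ yields a (generally non-compact) space. The key structural observation is that $Y$, being obtained from $X$ by deleting the $n$ remainder points, is an open subspace of $X$, hence locally compact Hausdorff, and so are all of its cards.

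Next I would analyse the non-compact reconstruction $Z$. Since compactness is not assumed reconstructible, I cannot rule out $Z$ non-compact directly; instead I pass to the Alexandroff one-point compactification $\omega Z$, which exists because $Z$ must be locally compact Hausdorff (this should follow since $Z$ shares a deck with $X$, and local compactness/Hausdorffness transfer via the reconstruction machinery — local compactness is a local property, reconstructible by Theorem \ref{localproperty}, and $T_2$ is reconstructible by Theorem \ref{mythm8}, using $\cardinality{X}\ge 3$). Now I want to locate a card of $Z$ that is homeomorphic to $Y$ itself, or at least to some card of $X$ lying in $Y$; then $\omega Z$ restricted appropriately becomes a compactification of $Y$ (or of a card of $Y$) with remainder of size $n+1$ — one old remainder point's worth of structure plus the new point at infinity — contradicting maximality. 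The crucial counting step is: deleting from $Z$ a point that "should have been" a remainder point of $X$ produces a card with one fewer non-compact-making deletion available, and reassembling via $\omega Z$ overshoots the allowed remainder size.

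I expect the main obstacle to be making the bridge between "$Z$ is a reconstruction of $X$" and "$Z$ contains a copy of $Y$ as the non-compact part" rigorous, since a priori $Z$ need not split as a space-plus-finite-remainder in any canonical way. The clean route is probably: take any card $Z \setminus \singleton{z}$; it equals some card of $X$. If for every $z$ the card $Z \setminus \singleton{z}$ were compact, then $Z$ would be compact (a space all of whose one-point-deletions are compact, in the Hausdorff setting with $\cardinality{Z} \ge 3$, is compact — or one invokes that $Z$ is the one-point compactification of such a card), contradiction; so some card $Z \setminus \singleton{z}$ is non-compact, meaning it is one of the non-compact cards of $X$, i.e. homeomorphic to $X \setminus \singleton{y}$ for some $y \in Y$. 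Then $\omega Z$ is a compactification of $Z \setminus \singleton{z} \cong X \setminus \singleton{y}$ adding exactly one point. But $X$ itself is a compactification of $X \setminus \singleton{y}$ adding one point ($y$), with remainder (in the sense of the original $Y$) of total size $n+1$ when we also account for $R$; chasing the definitions, $\omega Z$ yields an $(n+1)$-point compactification of $Y$, contradicting that $X$ is the maximal finite-point compactification.

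For the final sentence of the statement: if some card $Y_0 = X \setminus \singleton{x_0}$ has no $2$-point compactification, then in particular $x_0 \in R$ forces... more simply, take the space $Y' := X \setminus R$ reconstructed as needed; the hypothesis "one card has no $2$-point compactification" is exactly the instance $n = 1$ (equivalently, a witness that no $(k+1)$-point compactification of the relevant subspace exists), so $X$ is the maximal finite-point compactification of that card, and the main statement applies. I would phrase this by noting that if $Y_0$ has no $2$-point compactification then $X$ (a $1$-point compactification of $Y_0$) is automatically the maximal finite-point compactification of $Y_0$, so reconstructibility follows from the first part.
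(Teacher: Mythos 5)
Your overall skeleton is the same as the paper's (show every reconstruction $Z$ is compact and invoke Theorem \ref{compactreconstruction}; get $Z$ locally compact Hausdorff from Theorems \ref{localproperty} and \ref{mythm8} and pass to $\omega Z$), but the step that is supposed to produce the contradiction with maximality does not go through as you set it up. You locate a non-compact card of $Z$ and assert it is homeomorphic to $X \setminus \singleton{y}$ for some $y \in Y$. That inference is unjustified (non-compact cards of $X$ also arise by deleting remainder points, which are never isolated), and, worse, the case $y \in Y$ is exactly the unhelpful one: then $\omega Z$ is a two-point compactification of $X \setminus \singleton{y}$, hence a compactification of $Y \setminus \singleton{y}$ with $n+2$ remainder points --- it is \emph{not} a compactification of $Y$, because $Y$ itself need not embed densely in $\omega Z$ (nothing fills in for the missing point $y$). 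Maximality of $X$ over $Y$ places no restriction on compactifications of $Y \setminus \singleton{y}$, so your claimed ``$(n+1)$-point compactification of $Y$'' never materialises. Concretely: $X=[0,1]$ is the maximal finite-point compactification of $Y=(0,1)$ by Lemma \ref{mythm23}(1), yet the non-compact card $X \setminus \singleton{1/2} \cong [0,1/2) \oplus (1/2,1]$ \emph{does} have a two-point compactification, namely $[0,1/2]\oplus[1/2,1]$; so landing on a card of this type yields no contradiction at all.

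The missing idea is to use the card obtained by deleting a \emph{remainder} point: with $Y' = X \setminus \singleton{x_0}$, $x_0$ in the remainder, the subspace $Y$ is dense in $Y'$, so every compactification of $Y'$ is automatically a compactification of $Y$ with one extra remainder point; maximality then says $Y'$ has no two-point compactification. Since $\singletonDeletion{Z}=\singletonDeletion{X}$, some card $Z\setminus\singleton{z}$ is homeomorphic to $Y'$, and a non-compact $Z$ would make $\omega Z$ precisely such a forbidden two-point compactification --- this is the paper's argument, and it never needs your detour through ``some card of $Z$ is non-compact''. Two further points need attention in your write-up: for the count to be correct you need $z$ non-isolated in $Z$ (so that $Z\setminus\singleton{z}$ is dense in $Z$ and $\omega Z$ adds exactly the two points $z$ and $\infty$ to the card --- your phrase ``adding exactly one point'' miscounts); the paper secures this by first disposing of the case that $X$ has an isolated point via Theorem \ref{mythm20} and then using Theorem \ref{mythm18} to get $i(Z)=0$. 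The same density caveat affects your reduction in the ``in particular'' clause: to view $X$ as the maximal finite-point compactification of a card $Y_0$ with no two-point compactification, $Y_0$ must be dense in $X$, i.e.\ obtained by deleting a non-isolated point, which again is guaranteed only after the isolated-point case has been handled separately.
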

\begin{proof}
We may assume that $X$ has no isolated points, as otherwise it is reconstructible by Theorem \ref{mythm20}. Now assume that $Y = X \setminus \Set{x_0,\dots,x_n}$ is as above. Observe that $Y'=X \setminus \Set{x_0}$ is locally compact and that any compactification of $Y'$ is a compactification of $Y$ (as $x_1,\dots,x_n$ are not isolated). Hence $Y'$ does not have a 2-point compactification.

Now assume that $Z$ is a non-compact reconstruction from $\singletonDeletion{X}$. Then $Z$ must be locally compact Hausdorff and contain no isolated points. Hence $Z$ has a one-point compactification $\omega Z$. But then $\omega Z$ would be a two-point compactification of $Y'$, a contradiction.

Hence every reconstruction of $X$ is compact Hausdorff and by Theorem \ref{compactreconstruction} the result follows.
\end{proof}

We call a space $Z$ a \emph{Stone-\v{C}ech space} if it occurs as a Stone-\v{C}ech compactification, meaning there exists a non-compact Tychonov space $X$ such that $Z=\beta X$. A similar idea as above allows us to prove that Stone-\Cech\ spaces are reconstructible:

\begin{theorem}
\label{Stonecech} \label{thm:StoneCech}
Every Stone-\v{C}ech space $Z=\beta X$ is reconstructible.
\end{theorem}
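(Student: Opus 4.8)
The plan is to mimic the strategy of Theorem \ref{thm:maximalFiniteCompactification}: show that every reconstruction of $Z=\beta X$ is compact, and then invoke Theorem \ref{thm:allReconstructionsCompact}. So suppose $W$ is a reconstruction of $Z$, and assume for contradiction that $W$ is not compact. First I would handle the isolated-point case: if $Z$ has an isolated point, then $Z$ is reconstructible by Theorem \ref{mythm20}, so we may assume $Z$ has no isolated points; note that then no card has an isolated point (by Theorem \ref{mythm18}, or directly), so $W$ has no isolated point either. Since $Z$ is compact Hausdorff, every card is Tychonoff, hence by the reconstructibility of the $T_{3\frac12}$ axiom (Theorem \ref{mythm8}) the reconstruction $W$ is Tychonoff as well.

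The key structural observation is the following. Pick a point $x\in X\subseteq \beta X = Z$, and consider the card $Z\setminus\{x\}$. I claim that when $x$ lies in the dense copy of $X$, deleting it still leaves a space whose Stone-\v{C}ech compactification is a ``small'' compactification of $Z\setminus\{x\}$ — concretely, $\beta(Z\setminus\{x\})$ has remainder consisting of a single point, because $Z\setminus\{x\}$ contains the dense $C^*$-embedded set $X\setminus\{x\}$ whose closure is already $Z$ minus at most the point $x$; more carefully, one shows $Z = \beta(X\setminus\{x\})$ is still a compactification of $X\setminus\{x\}$ and in fact remains its Stone-\v{C}ech compactification, so that $Z\setminus\{x\}$ sits between $X\setminus\{x\}$ and its Stone-\v{C}ech compactification and thus $\beta(Z\setminus\{x\}) = Z$, giving a one-point remainder. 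Hence some card $Y=Z\setminus\{x\}$ has the property that its unique-by-maximality one-point compactification is $Z$, and crucially $Y$ admits no larger (two-point) compactification without violating the Stone-\v{C}ech maximality of $Z=\beta(X\setminus\{x\})$: any compactification of $Y$ would restrict to a compactification of the dense subset $X\setminus\{x\}$, which factors through $\beta(X\setminus\{x\})=Z$, so it can add at most the one point that $Z$ already added. Now $W$, being Tychonoff, locally compact (it is a dense open subset structure? — here I need $W$ locally compact, which I get since $W$ is a non-compact card's-worth of space and every card of $Z$, being an open subspace of a compact Hausdorff space, is locally compact, and local properties are reconstructible by Theorem \ref{localproperty}), has a one-point compactification $\omega W$. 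Deleting a point of $W$ gives a card, which is homeomorphic to the card $Y$ above; so $\omega W$ restricted to that card-complement would exhibit a two-point compactification of $Y$ — contradiction.

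The main obstacle I anticipate is making the ``no two-point compactification of the card'' step airtight. The subtlety is that deleting $x$ from $Z$ might delete a point of the remainder $\beta X\setminus X$ rather than a point of $X$, and for remainder points the statement ``$\beta(Z\setminus\{x\})$ has one-point remainder'' can fail badly (the remainder $\beta X\setminus X$ is typically huge and removing one point of it need not behave well). The fix is that I get to choose which card to work with: I only need \emph{one} card $Y$ of $Z$ that has no two-point compactification, and for that I should delete a point $x$ of the dense set $X$ — and here I must check $X$ is nonempty and, more delicately, that $X\setminus\{x\}$ is still dense in $Z$ and still $C^*$-embedded, so that $\beta(X\setminus\{x\})=Z$; density is automatic when $Z$ has no isolated points, and $C^*$-embeddedness follows because $X$ is $C^*$-embedded in $\beta X$ and removing one point of a dense $C^*$-embedded set in a compact Hausdorff space keeps it $C^*$-embedded (any bounded continuous function on $X\setminus\{x\}$ extends to $X$ by continuity since $x\in\overline{X\setminus\{x\}}$, then to $\beta X=Z$). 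Then the card-deletion argument: every reconstruction $W$ of $Z$, if non-compact, would on deleting a suitable point produce a homeomorphic copy of $Y$, and $\omega W$ would give $Y$ a two-point compactification (two-point because $W$ is non-compact and has no isolated points, so $\omega W$ adds exactly one point and the card of $W$ is $W$ minus one point, whose closure in $\omega W$ adds two points), contradicting the previous paragraph. Therefore $W$ is compact, and Theorem \ref{thm:allReconstructionsCompact} finishes the proof.
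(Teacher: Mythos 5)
Your overall architecture is sound and is essentially the paper's: reduce to the case of no isolated points via Theorem \ref{mythm20}, exhibit one card with no 2-point compactification, conclude that every reconstruction is compact, and finish with Theorem \ref{thm:allReconstructionsCompact} (indeed the paper simply cites Theorem \ref{mythm21}, whose proof is exactly your last step, so you are re-deriving it). But the key structural claim is where the proposal breaks, and you resolve the subtlety you correctly identified in precisely the wrong direction. You choose to delete a point $x$ of the dense copy of $X$, and your justification that $X\setminus\singleton{x}$ is still $C^*$-embedded --- ``any bounded continuous function on $X\setminus\singleton{x}$ extends to $X$ by continuity since $x\in\overline{X\setminus\singleton{x}}$'' --- is false: density gives at most uniqueness of an extension, never existence. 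Take $X=\R$, $x=0$ and $f(t)=\sin(1/t)$; this is bounded and continuous on $\R\setminus\singleton{0}$ and has no continuous extension to $0$, so $\beta(\R\setminus\singleton{0})\neq\beta\R$. The failure is not just in the argument but in the conclusion: for $Z=\beta\R$ and $x=0$, the card $Z\setminus\singleton{0}$ decomposes into the two clopen noncompact pieces $\overline{(-\infty,0]}\setminus\singleton{0}$ and $\overline{[0,\infty)}\setminus\singleton{0}$ (closures in $\beta\R$), and gluing a point at infinity onto each piece gives a \emph{two}-point compactification of that card. So the card you selected simply does not have the property your proof needs.

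The correct choice is the opposite one, and it is the paper's: delete a point $z$ of the remainder $Z\setminus X$, which is nonempty because $X$ is non-compact Tychonoff. Then the card $Y=Z\setminus\singleton{z}$ satisfies $X\subseteq Y\subseteq\beta X$, and a space squeezed between $X$ and $\beta X$ has Stone-\v{C}ech compactification $\beta X$ \cite[3.6.9]{Engelking}; hence $\beta Y=Z$ has one-point remainder, every compactification of $Y$ is a quotient of $Z$ fixing $Y$, and so $Y$ admits no 2-point compactification. Your worry that remainder points ``fail badly'' is unfounded --- they are exactly the good case, while points of $X$ are the bad one. With this substitution (or by invoking Theorem \ref{mythm21} directly), the remainder of your argument --- local compactness of a non-compact reconstruction $W$, forming $\omega W$, and reading off a forbidden 2-point compactification of $Y$ --- goes through as you wrote it.
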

\begin{proof}
If $Z$ contains an isolated point $z$ by Theorem \ref{mythm20}, $Z$ is reconstructible. So we may assume that $Z$ contains no isolated points. Once we have shown that one card has no 2-point compactification, our claim follows from Theorem \ref{mythm21}. Choose $z \in Z \setminus X$. Then $Y=Z \setminus \singleton{z}$ lies in between $X$ and its Stone-\v{C}ech compactification, yielding $\beta Y = Z$ \cite[3.6.9]{Engelking}.
\end{proof}

This theorem raises the question whether there is an elegant way that allows us to decide when a compact Hausdorff space is a Stone-\v{C}ech space. To our knowledge, this question has only been answered in special cases. One of these cases has be investigated in \cite[Thm.\ 2]{Glicksberg}. In this paper, I. Glicksberg showed that when $\kappa$ is uncountable, then both the Cantor cube $\Set{0,1}^{\kappa}$ and the Hilbert cube $I^{\kappa}$ are Stone-\v{C}ech spaces. Thus, they are reconstructible.


We now describe a method which deals with the reconstruction of topological properties that can be characterised in terms of the relation between a Tychonov space $X$ and its Stone-\v{C}ech compactification. Suppose $\script{P}$ is a topological property which is hereditary with respect to cards. We call $\script{P}$ \emph{Stone-\v{C}ech characterisable} if there exists a property of subspaces $\script{Q}$ such that \[X \vdash \script{P}\; (X \text{ has } \script{P}) \quad \text{if and only if} \quad (X,\alpha X) \vdash \script{Q}\] for any compactification $\alpha X$ of $X$. The property $\script{Q}$ is \emph{finitely additive} if whenever $(Y_1,Z) \vdash \script{Q}$ and  $(Y_2,Z) \vdash \script{Q}$ then as well  $(Y_1 \cup Y_2,Z) \vdash \script{Q}$.

\begin{lemma}
	\label{betaXreconstruction}
	Let $X$ be a $T_{3\frac12}$-space. Suppose that $\script{P}$ is a Stone-\v{C}ech characterisable property such that the corresponding $\script{Q}$ is finitely additive. Then $\script{P}$ is reconstructible.
\end{lemma}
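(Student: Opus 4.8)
The plan is to prove that every reconstruction $Z$ of $X$ satisfies $\script{P}$ exactly when $X$ does, the main tool being a density argument that glues the auxiliary property $\script{Q}$ across two cards whose union is all of $Z$; when too few cards are available for this, I fall back on the isolated-point results of Section~\ref{section:cardinalInvariants}. As in Theorem~\ref{mythm8} we assume $\cardinality{X}\ge 3$, so $\cardinality{Z}=\cardinality{X}\ge 3$. Since $T_{3\frac12}$ is hereditary, every card of $X$ is $T_{3\frac12}$; as $\singletonDeletion{Z}=\singletonDeletion{X}$, every card of $Z$ is $T_{3\frac12}$, whence $Z$ is $T_{3\frac12}$ by Theorem~\ref{mythm8}. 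The situation is now symmetric in $X$ and $Z$, so it suffices to show that $X\vdash\script{P}$ implies $Z\vdash\script{P}$. Assuming $X\vdash\script{P}$, and using that $\script{P}$ is hereditary with respect to cards, every card of $X$ has $\script{P}$, and hence---the deck being a topological invariant---every card of $Z$ has $\script{P}$ as well.

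The heart of the proof handles the case where $Z$ has two distinct non-isolated points $z_1\ne z_2$. For $i=1,2$ set $Z_i=Z\setminus\singleton{z_i}$, which is a card of $Z$, so $Z_i\vdash\script{P}$. Let $\gamma Z$ be an arbitrary compactification of $Z$ (for instance $\beta Z$). Since $z_i$ is not isolated, $Z_i$ is dense in $Z$, hence dense in $\gamma Z$, so $\gamma Z$ is also a compactification of $Z_i$; Stone-\Cech{} characterisability of $\script{P}$ then yields $(Z_i,\gamma Z)\vdash\script{Q}$. Because $Z=Z_1\union Z_2$, finite additivity of $\script{Q}$ gives $(Z,\gamma Z)\vdash\script{Q}$, and as $\gamma Z$ was arbitrary we conclude $Z\vdash\script{P}$.

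It remains to deal with $Z$ having at most one non-isolated point. If $Z$ is discrete, then so are all of its cards, hence all cards of $X$, whence $X$ is discrete (here one uses $\cardinality{X}\ge 3$); as $\cardinality{X}=\cardinality{Z}$ this forces $X\cong Z$. If $Z$ has exactly one non-isolated point and is finite, then $Z$ is a $T_1$-space with a positive finite number of isolated points, so $Z$ is reconstructible by Theorem~\ref{mythm19}, and again $X\cong Z$. The one genuinely hard case---which I expect to be the main obstacle---is that $Z$ has exactly one non-isolated point and infinitely many isolated points. Then the density argument is unavailable: the only card of $Z$ that is dense in $Z$ is the one obtained by deleting the unique non-isolated point, and a point that is isolated in a dense subspace remains isolated in every compactification, so a single isolated point can never be reattached by finite additivity. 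To handle this case I would use the reconstructibility of the number of isolated points (Theorem~\ref{mythm18}) together with the discreteness analysis above to see that $X$, too, consists of a single non-isolated point sitting over $\cardinality{Z}$-many isolated ones, and then extract from the deck enough information about the neighbourhood filter at that point to conclude that $X\cong Z$, so that $Z\vdash\script{P}$ trivially.
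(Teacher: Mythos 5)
Your main case is, in substance, the paper's own argument: after disposing of isolated points the paper picks $x_1 \neq x_2$ in a dense-in-itself space, notes that $\beta X$ is a compactification of both cards $Y_i = X \setminus \singleton{x_i}$, obtains $(Y_i,\beta X)\vdash \script{Q}$ from the Stone-\Cech{} characterisation, and glues by finite additivity since $Y_1 \cup Y_2 = X$. Running this gluing inside the reconstruction $Z$ with an arbitrary compactification, and requiring only two non-isolated points rather than no isolated points at all, is correct and in fact a slightly more flexible version of the same step.

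The genuine gap is the case you yourself flag: $Z$ (equivalently $X$) has exactly one non-isolated point and infinitely many isolated points. Your fallback plan --- ``extract from the deck enough information about the neighbourhood filter at that point to conclude $X \cong Z$'' --- is not an argument, and it aims at much more than the lemma requires: such spaces are precisely the filter spaces $\kappa \cup \Set{\script{F}}$, and proving that these are reconstructible is a substantive claim established nowhere in the paper and not needed here. The paper closes this case by a different device: whenever the number of isolated points is infinite (and this number is reconstructible, Theorem \ref{mythm18}), it asserts that $Z$ is homeomorphic to one of its own cards, so $Z \in \singletonDeletion{Z} = \singletonDeletion{X}$; since $\script{P}$ is hereditary with respect to cards, every card of $X$ has $\script{P}$, hence $Z \vdash \script{P}$ --- no homeomorphism between $X$ and $Z$ is ever produced (the paper records this membership claim without further justification, but it is the intended mechanism). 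That is the idea missing from your write-up: in the leftover case you only need $Z$ to be a card of $X$, not a copy of $X$. Your own observation that the density argument cannot be repaired there (the only dense card is the discrete one, and $\script{Q}$ gives no purchase on the singleton $\singleton{z^*}$) is correct, which is exactly why some card-identification device of this kind, rather than more gluing, has to finish the proof.
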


\begin{proof}
	Since $\script{P}$ is hereditary with respect to cards, all $Y \in \singletonDeletion{X}$ have $\script{P}$. Suppose that $Z$ is a reconstruction of $X$. In the case where $X$ has infinitely many isolated points, $Z$ must have infinitely many isolated points and hence $Z \in \singletonDeletion{Z} = \singletonDeletion{X}$. Thus $Z \vdash \script{P}$. If $X$ has finitely many isolated points, then $X$ is reconstructible by Theorem \ref{mythm19} and again $X \vdash \script{P}$.
	
	So we are allowed to assume that $X$ and all cards $Y$ are dense-in-itself. Choose $x_1 \neq x_2 \in X$ and consider the cards $Y_i = X \setminus \singleton{x_i}$. Since all spaces are dense-in-itself, $\beta X$ is a compactification of both $Y_i$. By assumption, we have $(Y_i, \beta X) \vdash \script{Q}$. As $\script{Q}$ is finitely additive, we have  $(Y_1 \cup Y_2, \beta X) \vdash \script{Q}$. However, $Y_1 \cup Y_2=X$, so finally $X \vdash \script{P}$, completing the proof.
\end{proof}

\begin{theorem}
\label{cechcomplete}
	\v{C}ech-completeness is reconstructible, i.e.\ a $T_{3\frac12}$-space is \v{C}ech-complete if and only if all cards $Y \in \singletonDeletion{X}$ are \v{C}ech-complete.
\end{theorem}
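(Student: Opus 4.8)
The plan is to deduce this from Lemma~\ref{betaXreconstruction}: the work reduces to presenting \v{C}ech-completeness as a Stone-\v{C}ech characterisable property whose associated property $\mathcal{Q}$ of subspaces is finitely additive, and to checking the heredity hypothesis. For the characterisation I would invoke the classical fact (\cite[3.9.1]{Engelking}) that a $T_{3\frac12}$-space $X$ is \v{C}ech-complete if and only if $X$ is a $G_\delta$-subset of $\beta X$, equivalently a $G_\delta$-subset of \emph{every} Hausdorff compactification of $X$. Taking $\mathcal{Q}$ to be the property ``$Y$ is a $G_\delta$-subset of $Z$'', one then has $X \vdash \mathcal{P}$ if and only if $(X,\alpha X) \vdash \mathcal{Q}$ for any compactification $\alpha X$, so \v{C}ech-completeness is Stone-\v{C}ech characterisable. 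Heredity with respect to cards is routine: in a $T_{3\frac12}$-space a card $X \setminus \singleton{x}$ is an open subspace, and if $U = W \intersect X$ with $W$ open in $\beta X$ and $X = \bigcap_n W_n$ with each $W_n$ open in $\beta X$, then $U = W \intersect \bigcap_n W_n$ is a $G_\delta$-subset of $\beta X$, hence a $G_\delta$-subset of the compactification $\closureIn{\beta X}{U}$ of $U$; so $U$ is \v{C}ech-complete.

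The one step requiring a small idea is the finite additivity of $\mathcal{Q}$. Let $Y_1 = \bigcap_n U_n$ and $Y_2 = \bigcap_n V_n$ be $G_\delta$-subsets of a space $Z$ with all $U_n, V_n$ open. The key identity is $Y_1 \union Y_2 = \bigcap_{n,m}\p{U_n \union V_m}$: a point outside $Y_1$ misses some $U_{n_0}$ and is therefore forced into every $V_m$, hence into $Y_2$. Since there are only countably many sets $U_n \union V_m$ and each is open, $Y_1 \union Y_2$ is a $G_\delta$-subset of $Z$, which is exactly finite additivity of $\mathcal{Q}$.

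With these verifications Lemma~\ref{betaXreconstruction} immediately gives that \v{C}ech-completeness is reconstructible; the concrete ``if and only if'' in the statement then follows from the heredity above together with the dense-in-itself argument carried out in the proof of Lemma~\ref{betaXreconstruction} (which splits into the cases of infinitely many, finitely many positive, and no isolated points). I expect no serious obstacle: essentially everything is bookkeeping, and the only genuine content is the observation that a finite union of $G_\delta$ sets is again $G_\delta$, together with the point that \v{C}ech-completeness may be tested in an arbitrary compactification rather than only in $\beta X$.
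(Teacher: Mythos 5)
Your proposal is correct and takes essentially the same route as the paper: both reduce the theorem to Lemma~\ref{betaXreconstruction} with $\mathcal{Q}$ being ``is a $G_\delta$-subset of'', verify heredity by noting a card is open in $X$, hence a $G_\delta$-subset of $\beta X$ and thus of its closure there, and obtain finite additivity from the fact that a union of two $G_\delta$-sets is $G_\delta$. Your identity $Y_1 \cup Y_2 = \bigcap_{n,m}\p{U_n \cup V_m}$ merely spells out a detail the paper leaves implicit.
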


\begin{proof}
	Let $\script{P}$ denote \v{C}ech-completeness. Recall that a space $X$ has $\script{P}$ if $X$ is a $G_{\delta}$-set in every ($\Leftrightarrow$ one) compactification. We show that $\script{P}$ satisfies all assumptions in Lemma \ref{betaXreconstruction}. 	$\script{P}$ is hereditary with respect to cards. If $Y \subset X$ is an open subspace and $X \subset \beta X$ is a $G_{\delta}$-set then it is clear that $Y$ is a $G_{\delta}$-set in $\beta X$ as well. So $Y$ is a $G_{\delta}$-set in $\closure{Y} \subset \beta X$ which is a compactification of $Y$. This shows that $Y$ is \v{C}ech-complete. Moreover, the union of two $G_{\delta}$-sets is again a $G_{\delta}$-set. This shows that $\script{Q}$ is finitely additive, as required. 
	\end{proof}

\section{Compactness-like properties and metrizability} \label{section:compactness}
Compactness is surely one of the strangest properties with respect to reconstruction. We have already seen that compactness itself is not reconstructible. At the same time, compactness-like properties play an essential role in our results about reconstruction. 

It is clear that if one card of a space $X$ is compact (Lindel\"of, countably compact, pseudocompact) then so is $X$. In the case of Lindel\"ofness and pseudocompactness, we can use this observation in certain circumstances. Since Lindel\"ofness is inherited by $F_{\sigma}$-sets, it is reconstructible in spaces having a $G_\delta$-point, e.g. in $T_6$ and first-countable spaces. Further, recall that a space $X$ is pseudocompact if in one compactification $\alpha X$ every closed $G_{\delta}$-set intersects $X$. This yields that pseudocompactness is reconstructible is spaces containing a non-$G_\delta$-point. Together, these two observations shed light on the question under which conditions compactness is reconstructible.

\begin{theorem}
\label{Gdeltareconstruction}
Suppose $X$ is a compact $T_2$-space containing points $x_1$ and $x_2$ such that $x_1$ is a $G_{\delta}$-point and $x_2$ is not. Then $X$ is reconstructible.
\end{theorem}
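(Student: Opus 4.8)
The strategy is to reduce everything to compactness: since $X$ is compact Hausdorff, Theorem~\ref{compactreconstruction} tells us it suffices to prove that every reconstruction $Z$ of $X$ is compact. Observe first that a non-$G_\delta$-point cannot occur in a finite Hausdorff space (which is discrete, so all of its points are $G_\delta$); hence $X$ is infinite, $\cardinality{X}\ge 3$, and by Theorem~\ref{mythm8} every reconstruction $Z$ is $T_3$ — and therefore $T_4$ as soon as we know it is Lindel\"of. The roles of the two distinguished points are to place a Lindel\"of card and a pseudocompact card into the deck of $X$, after which the classical fact ``Lindel\"of $+$ pseudocompact $+$ regular $\Rightarrow$ compact'' does the rest.

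For the Lindel\"of card, write $\singleton{x_1}=\bigcap_{n}U_n$ with the $U_n$ open; then $Y_1=X\setminus\singleton{x_1}=\bigcup_n(X\setminus U_n)$ is a countable union of compact (hence Lindel\"of) sets, so $Y_1$ is Lindel\"of. For the pseudocompact card, note $x_2$ is not isolated (isolated points are $G_\delta$), so $Y_2=X\setminus\singleton{x_2}$ is dense in $X$ and $X$ is a compactification of the locally compact Hausdorff space $Y_2$; by the pseudocompactness criterion recalled before the theorem, $Y_2$ is pseudocompact because every nonempty closed $G_\delta$-subset $F$ of $X$ meets $Y_2$ (else $F=\singleton{x_2}$, contradicting the choice of $x_2$). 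Since $\singletonDeletion{Z}=\singletonDeletion{X}$, the reconstruction $Z$ has a Lindel\"of card and a pseudocompact card; as a space possessing a Lindel\"of (resp.\ pseudocompact) card is itself Lindel\"of (resp.\ pseudocompact), $Z$ is both Lindel\"of and pseudocompact.

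To finish, such a $Z$ must be compact: being $T_3$ and Lindel\"of it is $T_4$; a normal pseudocompact space is countably compact (a countably infinite closed discrete subset would, via the Tietze extension theorem, support an unbounded continuous real-valued function, contradicting pseudocompactness); and a countably compact Lindel\"of space is compact. Hence every reconstruction of $X$ is compact, and Theorem~\ref{compactreconstruction} yields that $X$ is reconstructible. The one step I expect to need genuine care is checking that $Y_2$ is pseudocompact — i.e.\ applying the ``every nonempty closed $G_\delta$ of the compactification meets the space'' characterisation to the compactification $X$ of $Y_2$; the remaining implications between covering properties are entirely standard.
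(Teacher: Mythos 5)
Your proof is correct and follows essentially the same route as the paper: the $G_\delta$-point yields a Lindel\"of card, the non-$G_\delta$-point yields a pseudocompact card via the compactification criterion, so every reconstruction is Lindel\"of, pseudocompact and normal, hence compact, and Theorem~\ref{compactreconstruction} finishes. The only (harmless) differences are that you obtain normality of a reconstruction from $T_3$ (Theorem~\ref{mythm8}) plus Lindel\"ofness rather than from the $T_4$ clause of Theorem~\ref{mythm8} applied to the card $X\setminus\singleton{x_1}$, and that you dispense with the paper's reduction to the case of no isolated points.
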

\begin{proof}
Again we may assume that $X$ does not contain isolated points. The card $Y_1=X \setminus \singleton{x_1}$ is Lindel\"of and normal. Theorem \ref{mythm8} and the previous remarks give that every reconstruction of $X$ is $T_4$, Lindel\"of and pseudocompact. Now use Theorem \ref{compactreconstruction} together with the fact that in $T_4$-spaces, pseudocompactness and countable compactness are equivalent, gives that every reconstruction is countably compact, Lindel\"of, hence compact. 
\end{proof}

Another reason that makes compactness-like properties so interesting in our context is that is can be used to obtain results about metrizability. Our first result in this direction is about paracompactness. It is worth noting this result's symmetry to the partial reconstruction of the $T_4$ axiom in Theorem \ref{mythm8}.

\begin{lemma}
\label{mylem9}
Suppose that every card $Y \in \singletonDeletion{X}$ is $T_3$ and that one card is paracompact. Then $X$ is paracompact.
\end{lemma}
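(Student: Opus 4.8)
The plan is to mirror the structure of the $T_4$ argument in Theorem \ref{mythm8}, replacing ``separating two closed sets'' with ``finding a locally finite open refinement of an open cover''. First I would dispose of the trivial case: if $X$ has an isolated point, then by Theorem \ref{mythm19} (or the $i(X)$-reconstruction, Theorem \ref{mythm18}, together with the observation that a paracompact card forces finitely many isolated points unless all cards are paracompact) $X$ is in fact reconstructible, so I may assume $X$ is dense-in-itself and hence that every card $Y=X\setminus\singleton{y}$ is an \emph{open} subspace of $X$. I would also note at the outset that $X$ is $T_3$: this follows from Theorem \ref{mythm8}, since all cards are $T_3$ and $\cardinality{X}\geq 3$. (Paracompact $T_3$ is the standard notion of paracompactness here, consistent with the paper's convention of not building separation into covering properties.)

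Next, fix the card $Y_0 = X\setminus\singleton{x}$ that is paracompact, and let $\script{U}$ be an open cover of $X$. The key point is that $\script{U}$ restricted to $Y_0$ is an open cover of the paracompact space $Y_0$, so it has a locally finite open (in $Y_0$, hence in $X$, since $Y_0$ is open) refinement $\script{V}$. This $\script{V}$ covers $X\setminus\singleton{x}$ and is locally finite \emph{at every point of} $Y_0$; the only possible failure of local finiteness in $X$ is at the single point $x$. So the whole problem reduces to repairing the cover near $x$. Here I would use $T_3$ of $X$: choose $U\in\script{U}$ with $x\in U$ and, using regularity, an open $W$ with $x\in W\subseteq\closure{W}\subseteq U$. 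Now replace $\script{V}$ by $\Set{V\setminus\closure{W} : V\in\script{V}}\cup\Set{W}$. Each $V\setminus\closure{W}$ is open and refines the same member of $\script{U}$ that $V$ did; $W$ refines $U$; the new family still covers $X$ (points of $\closure W\setminus\singleton x$ lie in some old $V$, but are covered by $W$; the points outside $\closure{W}$ are covered by the $V\setminus\closure{W}$; and $x\in W$). Local finiteness: at points of $W$ only the single new set $W$ and finitely many $V$'s (those meeting $W$, bounded via local finiteness of $\script{V}$ at points of $Y_0\cap W$) occur --- actually one must be slightly careful here, so instead I would shrink once more: pick open $W'$ with $x\in W'\subseteq\closure{W'}\subseteq W$ and observe that the neighbourhood $W'$ of $x$ meets only $W$ among the new family together with the sets $V\setminus\closure W$, of which none meets $W'$ at all (they avoid $\closure W\supseteq W'$); hence $W'$ witnesses local finiteness at $x$. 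At a point $z\neq x$, either $z\notin\closure{W}$, where a small neighbourhood inside $Y_0\setminus\closure{W}$ meets only finitely many $V\setminus\closure{W}$ and not $W$, or $z\in\closure{W}\setminus\singleton{x}\subseteq Y_0$, where local finiteness of $\script V$ at $z$ in $Y_0$ bounds the $V$'s and hence the $V\setminus\closure W$'s, with $W$ the only possible extra set. Thus we have produced a locally finite open refinement of $\script U$, so $X$ is paracompact.

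The main obstacle is the bookkeeping around the point $x$: after deleting $\closure{W}$ from each $V$ one must be sure local finiteness genuinely holds \emph{at} $x$ and is not destroyed \emph{near} $x$, and the cleanest way to guarantee this is the double shrinking $x\in W'\subseteq\closure{W'}\subseteq W\subseteq\closure{W}\subseteq U$ so that the neighbourhood $W'$ of $x$ meets only $W$ from the modified family. Everything else --- that the modified family still covers, and that it refines $\script U$ --- is routine. No use of the multi-deck is needed; a single paracompact card suffices.
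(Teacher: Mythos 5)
Your construction has a genuine covering gap. After choosing $x \in W \subseteq \closure{W} \subseteq U$, you pass to the family $\Set{V \setminus \closure{W} \colon V \in \script{V}} \union \Set{W}$, but the boundary points of $W$, i.e.\ the points of $\closure{W} \setminus W$, are covered by nothing: they lie in $\closure{W}$, so subtracting $\closure{W}$ removes them from every $V$, and they do not lie in $W$. Your parenthetical claim that ``points of $\closure{W}\setminus\singleton{x}$ \dots\ are covered by $W$'' is exactly where this breaks. The repair is the obvious one-line change: subtract the \emph{smaller} closure. With $x \in W' \subseteq \closure{W'} \subseteq W$, the family $\Set{V \setminus \closure{W'} \colon V \in \script{V}} \union \Set{W}$ (or, even more simply, $\Set{V \setminus \closure{W} \colon V \in \script{V}} \union \Set{U}$, keeping the original cover element) does cover $X$, still refines $\script{U}$, and $W'$ (resp.\ $W$) is a neighbourhood of $x$ meeting only one member of the family; your local finiteness analysis at points other than $x$ then goes through as written, using that $Y_0$ is open in $X$.

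Two further remarks. The openness of $Y_0$ needs only that $X$ is $T_1$, which you already have from Theorem \ref{mythm8} since all cards are $T_3$; the opening reduction to the dense-in-itself case is both unnecessary and unjustified -- reconstructibility of $X$ (via Theorems \ref{mythm18} and \ref{mythm19}) says nothing about whether $X$ itself is paracompact, so that step should simply be deleted. For comparison, the paper sidesteps the surgery near $x$ entirely by invoking the characterisation that a regular space is paracompact as soon as every open cover has a locally finite refinement by \emph{arbitrary} sets \cite[5.1.11]{Engelking}: it picks $A \in \script{U}$ with $x \in A$, notes that $X \setminus A$ is a closed, hence paracompact, subspace of the paracompact card, refines there and adjoins $A$. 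Your more hands-on route, once the closure is subtracted in the right order, is equally valid and produces an open locally finite refinement directly.
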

\begin{proof}
We use \cite[5.1.11]{Engelking}: For a regular space $X$ we have the following equivalence: $X$ is paracompact if and only if every open cover has a locally finite refinement consisting of arbitrary sets. For the proof, let $\script{U}$ be an arbitrary open cover. Identify $x \in X$ such that $X \setminus \singleton{x}$ is paracompact. Choose $A \in \script{U}$ such that $x \in A$. Now $X \setminus A$ is a closed subset of a paracompact space $X \setminus \singleton{x}$, hence paracompact. Choose a locally finite open refinement $\script{U}'$. Now let $\script{U}''=\singleton{A} \cup \set{V \cap (X \setminus A)}:{V \in \script{U'}}$. This is a locally finite refinement (consisting of arbitrary sets) of $\script{U}$, as for all $x \in A$, this $A$ is a neighbourhood intersecting $\script{U}''$ only once. For all other points, local finiteness is inherited from $\script{U}'$. Since regularity is reconstructible by Theorem \ref{mythm8}, the characterisation given in the beginning of this proof validates our claim.
\end{proof}

\begin{theorem}
\label{recpara}
	In the realm of $T_3$-spaces containing one singleton $\singleton{x} \subset X$ which is a $G_{\delta}$-set, paracompactness is reconstructible.
\end{theorem}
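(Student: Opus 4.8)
The plan is to deduce the theorem from Lemma~\ref{mylem9} together with two ingredients: that ``being $T_3$'' and ``possessing a $G_\delta$-point'' are both determined by the deck, and the classical fact that an $F_\sigma$-subspace of a paracompact regular space is again paracompact. So fix $X$ as in the statement, say with $G_\delta$-point $x$, and let $Z$ be a reconstruction of $X$; I must show that $X$ is paracompact if and only if $Z$ is. I may assume $\cardinality{X} \ge 3$: when $\cardinality{X} \le 2$ the size being reconstructible forces $Z$ to be finite as well, so both spaces are compact and hence paracompact, and the statement is vacuous.

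First I would record the easy observation that, among $T_1$-spaces $W$ with $\cardinality{W} \ge 2$, the space $W$ has a $G_\delta$-point if and only if some card in $\singletonDeletion{W}$ does — so this is a deck invariant. Indeed, if $w$ is a $G_\delta$-point of $W$, pick any $v \neq w$; since $W$ is $T_1$ the card $W \setminus \singleton{v}$ is open in $W$, and $\singleton{w}$ remains a countable intersection of relatively open sets there, so $w$ is a $G_\delta$-point of that card. Conversely, any $G_\delta$-point of a card $W \setminus \singleton{v}$ is a $G_\delta$-point of $W$, once more because that card is open in $W$. Applying this to $X$, then reading it off the common deck, and using that $Z$ is $T_3$ by Theorem~\ref{mythm8}, I obtain that $Z$ is itself a $T_3$-space possessing a $G_\delta$-point. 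Thus $Z$ lies in the same realm as $X$, and it suffices to prove the single implication ``$X$ paracompact $\Rightarrow$ $Z$ paracompact'' and then reapply it with the roles of $X$ and $Z$ exchanged.

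For that implication, write $\singleton{x} = \bigcap_n U_n$ with each $U_n$ open in $X$. Since $X$ is $T_1$, $\singleton{x}$ is closed, so $X \setminus \singleton{x} = \Union_n (X \setminus U_n)$ is an $F_\sigma$-subset of the regular paracompact space $X$, and is therefore paracompact. Hence the card $Y := X \setminus \singleton{x}$ is paracompact, and $Y \in \singletonDeletion{X} = \singletonDeletion{Z}$; moreover every card of $X$, and so every card of $Z$, is $T_3$ because $T_3$ is hereditary. Lemma~\ref{mylem9}, whose hypothesis speaks only of the common deck, now delivers that $Z$ is paracompact. Exchanging $X$ and $Z$ — legitimate by the previous paragraph — gives the reverse implication, and the theorem follows.

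The only genuinely external input is the classical statement that an $F_\sigma$-subset of a paracompact regular space is paracompact (a consequence of \cite[5.1.28]{Engelking} together with the fact that closed subspaces of paracompact spaces are paracompact); the rest is bookkeeping with Theorem~\ref{mythm8}, Lemma~\ref{mylem9}, and the $G_\delta$-point observation. I expect the main point requiring care to be not any deep topology but rather the symmetric bookkeeping: one must make sure the reconstruction $Z$ inherits enough structure — being $T_3$ and having a $G_\delta$-point — for the argument to run in the reverse direction, which is exactly what Theorem~\ref{mythm8} and the observation secure.
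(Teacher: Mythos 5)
Your proof is correct and follows essentially the same route as the paper: the paper's (one-line) proof is exactly Lemma~\ref{mylem9} combined with the $F_\sigma$-heredity of paracompactness, applied to the card $X \setminus \singleton{x}$. The only difference is that you additionally verify that the reconstruction itself is $T_3$ with a $G_\delta$-point (via Theorem~\ref{mythm8} and the deck-invariance of having a $G_\delta$-point in $T_1$-spaces), making the symmetry of the argument explicit where the paper leaves it implicit.
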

\begin{proof} Lemma \ref{mylem9} together with the fact that paracompactness is hereditary with respect to $F_\sigma$ sets (\cite[5.1.28]{Engelking}).
\end{proof}


\begin{theorem}
\label{mythm14}
Metrizability is reconstructible. That is, $X$ is metrizable if and only if every card in $\singletonDeletion{X}$ is metrizable.
\end{theorem}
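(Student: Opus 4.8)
The plan is to prove the displayed equivalence directly. The forward implication is immediate, since metrizability is hereditary, so every card $X \setminus \{x\}$ of a metrizable space $X$ is metrizable. For the converse, I assume every card of $X$ is metrizable and argue that $X$ is metrizable.

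First I would make two reductions. Metrizable spaces are $T_3$, so every card is $T_3$ and Theorem \ref{mythm8} gives that $X$ itself is $T_3$ (assume $\cardinality{X} \ge 3$; the smaller cases are trivial or vacuous). Next, if $X$ has an isolated point $p$, then $X \setminus \{p\}$ is a metrizable card and $X \cong (X \setminus \{p\}) \oplus \discrete{1}$ is metrizable, so we may assume $X$ has no isolated point; then $\{x\}$ is closed but not open for every $x$, so every card $X \setminus \{x\}$ is open in $X$. Fixing distinct $x_1, x_2 \in X$ and writing $Y_i = X \setminus \{x_i\}$, we have realised $X = Y_1 \cup Y_2$ as a union of two open metrizable subspaces.

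The main step is to build a $\sigma$-locally finite base of $X$ and then quote the Nagata-Smirnov metrization theorem. By that theorem each $Y_i$ carries a $\sigma$-locally finite base $\mathcal{B}_i = \bigcup_n \mathcal{B}_{i,n}$, whose members are open in $X$. One cannot simply take $\mathcal{B}_1 \cup \mathcal{B}_2$: although it is a base of $X$, a family locally finite in the card $Y_i = X \setminus \{x_i\}$ may fail to be locally finite at $x_i$ in $X$. To repair this, use $T_3$-ness of $X$ to pick open $V_1 \ni x_1$, $V_2 \ni x_2$ with $\overline{V_1} \cap \overline{V_2} = \emptyset$, shrink to open $V_i' \ni x_i$ with $\overline{V_i'} \subseteq V_i$, and set $O = X \setminus (\overline{V_1'} \cup \overline{V_2'})$; then $X = V_1 \cup V_2 \cup O$, while $x_2 \notin \overline{V_1}$, $x_1 \notin \overline{V_2}$, and $x_1, x_2 \notin \overline{O}$ (indeed $V_i'$ is a neighbourhood of $x_i$ missing $\overline{O}$). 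Now $\mathcal{D}_1 = \{ B \in \mathcal{B}_2 : B \subseteq V_1 \}$ is a base of the open subspace $V_1$ and is $\sigma$-locally finite in $X$: at points of $Y_2$ this is inherited from $\mathcal{B}_2$ since $Y_2$ is open, and at $x_2$ the neighbourhood $X \setminus \overline{V_1}$ meets no member of $\mathcal{D}_1$. Symmetrically $\mathcal{D}_2 = \{ B \in \mathcal{B}_1 : B \subseteq V_2 \}$ is a base of $V_2$ that is $\sigma$-locally finite in $X$, and $\mathcal{D}_O = \{ B \in \mathcal{B}_1 : B \subseteq O \}$ is a base of $O$ that is $\sigma$-locally finite in $X$ (the bad point $x_1$ is handled by $V_1'$, which misses $\overline{O}$). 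Since $V_1, V_2, O$ are open and cover $X$, the union $\mathcal{D}_1 \cup \mathcal{D}_2 \cup \mathcal{D}_O$ is a $\sigma$-locally finite base of $X$, so $X$ is metrizable by Nagata-Smirnov.

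The only genuinely delicate point is the local-finiteness bookkeeping in the previous paragraph: local finiteness on a card does not transfer to $X$ at the deleted point, so one must split $X$ into finitely many open pieces whose closures avoid the relevant deleted point and transplant only the correspondingly localised portions of the cards' bases. Choosing the three open sets $V_1, V_2, O$ with all the required closure and covering relations is a routine iterated-shrinking argument from regularity and Hausdorffness, and once Nagata-Smirnov is invoked the rest is formal. An alternative would be to show $X$ is paracompact -- a regular space that is the union of two open paracompact subspaces is paracompact -- and then apply Smirnov's metrization theorem for paracompact locally metrizable spaces, trading the base construction for a covering argument of comparable length.
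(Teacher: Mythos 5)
Your proof is correct, but it takes a genuinely different route from the paper's. The paper gets the converse in two reused steps: local metrizability is a local property, hence reconstructible in $T_1$-spaces (Theorem \ref{localproperty}), and paracompactness of $X$ follows from Lemma \ref{mylem9} (all cards $T_3$, one card paracompact; every point of a metric space is $G_\delta$, cf.\ Theorem \ref{recpara}); Smirnov's metrization theorem (paracompact plus locally metrizable) then finishes. You instead argue directly with Nagata--Smirnov: after reducing to the $T_3$, no-isolated-point case via Theorem \ref{mythm8}, you write $X=Y_1\cup Y_2$ as a union of two open metrizable cards, use regularity to build the three-piece open cover $V_1,V_2,O$ whose closures avoid the relevant deleted points, and transplant only the localized portions of the cards' $\sigma$-locally finite bases. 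Your bookkeeping is sound: each transplanted family is locally finite in $X$ because at points of the relevant open card local finiteness is inherited, while at the deleted point the sets $X\setminus\closure{V_1}$, $X\setminus\closure{V_2}$ and $V_1'$ respectively miss every member of $\script{D}_1$, $\script{D}_2$ and $\script{D}_O$; since $V_1,V_2,O$ are open and cover $X$, the union is a $\sigma$-locally finite base and Nagata--Smirnov applies. In effect you re-prove by hand the gluing that the paper outsources to Lemma \ref{mylem9}, and your closing alternative (a regular union of two open paracompact subspaces is paracompact, then Smirnov) is exactly the paper's route; the paper's version is shorter only because Theorem \ref{localproperty} and Lemma \ref{mylem9} are already in place and are reused elsewhere (e.g.\ for complete metrizability), whereas yours is self-contained modulo a standard metrization theorem and uses only two cards. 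One small caveat: the omitted small cases are not quite ``trivial'' --- the two-point indiscrete space has only metrizable cards but is not metrizable --- so, as in Theorem \ref{mythm8}, the statement should be read with $\cardinality{X}\geq 3$; this is a defect of the statement shared with the paper, not of your argument.
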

\begin{proof}
	Every reconstruction of a metrizable space $X$ is locally metrizable by Theorem \ref{localproperty}. Further, in a metric space every point is a $G_\delta$. Thus, by Theorem \ref{recpara} every reconstruction of $X$ is paracompact. Now the Smirnov metrization Theorem ($X$ is metrizable if and only if $X$ is paracompact and locally metrizable) \cite[5.4.A]{Engelking} gives that $X$ is metrizable.
\end{proof}

\begin{theorem}
\label{completerec}
	Complete metrizability is reconstructible, i.e.\ $X$ is completely metrizable if and only if all cards $Y \in \singletonDeletion{X}$ are completely metrizable.
\end{theorem}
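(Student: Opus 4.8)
The goal is to mimic the strategy used for metrizability in Theorem \ref{mythm14}, replacing the Smirnov metrization theorem by a characterisation of complete metrizability that combines a reconstructible topological property with \v{C}ech-completeness, which we already know to be reconstructible by Theorem \ref{cechcomplete}. Recall the classical fact that a space is completely metrizable if and only if it is metrizable and \v{C}ech-complete (a metrizable space is completely metrizable precisely when it is a $G_\delta$ in some, equivalently every, compactification). So the proof should run as follows: first, complete metrizability implies metrizability, which is hereditary with respect to open subspaces, so every card of a completely metrizable $X$ is metrizable; likewise complete metrizability implies \v{C}ech-completeness, which is hereditary with respect to cards (as observed in the proof of Theorem \ref{cechcomplete}), so every card is \v{C}ech-complete. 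Hence both properties pass to the deck.

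\textbf{Key steps.} Let $Z$ be a reconstruction of a completely metrizable space $X$. By the previous paragraph, every card in $\singletonDeletion{X}=\singletonDeletion{Z}$ is metrizable, so by Theorem \ref{mythm14} the space $Z$ is metrizable. Every card is also \v{C}ech-complete, so by Theorem \ref{cechcomplete} the space $Z$ is \v{C}ech-complete. A metrizable \v{C}ech-complete space is completely metrizable, so $Z$ is completely metrizable. Finally, combine this with the fact that $\R$, or rather any completely metrizable space, is already known to be reconstructible only through its other characterising properties --- but here we do not need reconstructibility of $X$ itself, only that the property "completely metrizable" is preserved, which is exactly the claimed biconditional: $X$ is completely metrizable if and only if every card is.

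\textbf{Main obstacle.} The only real content beyond bookkeeping is making sure the classical characterisation "completely metrizable $\iff$ metrizable and \v{C}ech-complete" is available and correctly quoted (e.g. from \cite[4.3.26]{Engelking} or the Cantor–Hausdorff–Čech circle of results), and checking that \v{C}ech-completeness in Theorem \ref{cechcomplete} was indeed stated for $T_{3\frac12}$-spaces, which metrizable spaces certainly are. One subtlety: to apply Theorem \ref{cechcomplete} to conclude $Z$ is \v{C}ech-complete we should first know $Z$ is $T_{3\frac12}$; this is fine because we have already deduced $Z$ is metrizable, hence Tychonov. So the argument is essentially a two-line reduction to Theorems \ref{mythm14} and \ref{cechcomplete} via the standard metrization-theoretic equivalence, and no genuinely hard step arises.
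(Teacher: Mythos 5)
Your proposal is correct and follows exactly the paper's own argument: reduce complete metrizability to the conjunction of metrizability and \v{C}ech-completeness, then apply Theorems \ref{mythm14} and \ref{cechcomplete}. The extra care you take (heredity to cards, and noting that the reconstruction $Z$ is Tychonov because it is metrizable) only fleshes out the same two-line reduction the paper gives.
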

\begin{proof}
	A space is completely metrizable if and only if it is metrizable and \v{C}ech-complete. So the claim now follows from Theorems \ref{mythm14} and \ref{cechcomplete}.
\end{proof}

\section{Connectedness properties}\label{section:connectedness}
 In this section, we are again facing the difficulty that cards of connected spaces do not have to be connected. As in the case of compactness it is not hard to see that a space with more than three elements is connected if all cards are connected. A bit stronger: if $X$ has no isolated points then one connected card guarantees that $X$ is connected. Still, we already saw that connectedness is non-reconstructible. On the other side, various kinds of disconnectedness are reconstructible. These results base strongly on the following lemma by Kline.


\begin{lemma}[\cite{Kline}]
\label{mylem3}
	A connected space $X$ with $\cardinality{X} \geq 3$ cannot have more than one dispersion point. Moreover, if one card of $X$ is totally disconnected, then all other cards are connected. \qed
\end{lemma}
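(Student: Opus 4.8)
The plan is to prove both statements by the same device: analysing how a connected space can be cut by removing a point, using the observation that if $X\setminus\Set{p}$ is disconnected then $X$ is the union of the closures (in $X$) of two nonempty disjoint open-in-$X\setminus\Set{p}$ pieces, glued along $p$. Recall that a \emph{dispersion point} of a connected space $X$ is a point $p$ such that $X\setminus\Set{p}$ is totally disconnected; I will use the weaker notion of a \emph{cut point} (a point $p$ with $X\setminus\Set{p}$ disconnected) as an intermediate tool, since every dispersion point of a space with at least three points is a cut point.

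For the first statement, suppose toward a contradiction that $p\neq q$ are both dispersion points of a connected $X$ with $\cardinality{X}\ge 3$. Since $q$ is a dispersion point and $\cardinality{X}\ge 3$, the space $X\setminus\Set{q}$ is totally disconnected with at least two points, so it is disconnected; write $X\setminus\Set{q}=U\disjointSum V$ with $U,V$ nonempty, disjoint, and open in $X\setminus\Set{q}$. Connectedness of $X$ forces $q\in\closureIn{X}{U}\intersect\closureIn{X}{V}$, and moreover each of $U\union\Set{q}$ and $V\union\Set{q}$ is connected (a standard fact: if $X$ is connected and $X\setminus\Set{q}=U\disjointSum V$ is a separation, then $U\union\Set{q}$ is connected, for a separation of it would, together with $V$, separate $X$). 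Now locate $p$: without loss of generality $p\in U$, so $p\in U\union\Set{q}$, a connected set. Deleting $p$ from $X$ deletes it from $U\union\Set{q}$; since $V\union\Set{q}$ survives intact inside $X\setminus\Set{p}$ and is connected with at least... here one must be slightly careful. The cleaner route: the connected set $C:=V\union\Set{q}$ is contained in $X\setminus\Set{p}$ and has more than one point, contradicting that $p$ is a dispersion point (which requires $X\setminus\Set{p}$ to be totally disconnected). This is the contradiction, proving there is at most one dispersion point.

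For the second statement, suppose one card of $X$ is totally disconnected, say $X\setminus\Set{p}$ is totally disconnected; then $p$ is a dispersion point of $X$ (note $X$ is connected with $\cardinality{X}\ge 3$, so $X\setminus\Set{p}$ disconnected, hence this is a genuine dispersion point). Let $q\neq p$ be arbitrary; I must show $X\setminus\Set{q}$ is connected. Suppose not: $X\setminus\Set{q}=U\disjointSum V$ is a separation. As above, $q$ lies in the $X$-closure of both $U$ and $V$, and both $U\union\Set{q}$, $V\union\Set{q}$ are connected. Since $p\neq q$, $p$ lies in exactly one of $U,V$; say $p\in U$. Then $V\union\Set{q}\subseteq X\setminus\Set{p}$ is a connected subset with at least two points ($q$ and any point of $V$, which is nonempty), contradicting total disconnectedness of $X\setminus\Set{p}$. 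Hence $X\setminus\Set{q}$ is connected, as claimed.

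The main obstacle I anticipate is being scrupulous about the "connected with at least two points" step: one needs $V$ (or $U$) to be nonempty, which is part of the definition of a separation, and one needs the intermediate point to witness nontriviality — both routine, but the whole argument hinges on the classical fact that a separation $X\setminus\Set{q}=U\disjointSum V$ of the complement of a point in a connected space yields connected sets $U\union\Set{q}$ and $V\union\Set{q}$ with $q$ in the closure of each. That fact should be recalled and justified in one line. Everything else is bookkeeping about which side the auxiliary point $p$ falls on.
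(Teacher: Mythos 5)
Your argument is correct, and it is self-contained where the paper is not: the paper states this lemma with a citation to Kline's 1922 note and gives no proof, so there is no ``paper proof'' to match. Both of your parts rest on the single classical fact that if $X$ is connected and $X \setminus \{q\} = U \sqcup V$ is a separation, then $U \cup \{q\}$ and $V \cup \{q\}$ are connected; your one-line justification (``a separation of it would, together with $V$, separate $X$'') is the right one, but note that since no separation axioms are assumed, $U$ and $V$ need not be open in $X$, so the justification should be phrased with separated sets: if $U \cup \{q\} = C \sqcup D$ with $q \in C$, then $D$ and $C \cup V$ are separated in $X$ (using $\closure{D} \cap V \subseteq \closure{U} \cap V = \emptyset$ and $D \cap \closure{V} \subseteq U \cap \closure{V} = \emptyset$), contradicting connectedness of $X$. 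With that phrasing the bookkeeping you describe goes through exactly as written: in each part the point $p$ lands in one piece, and the other piece together with $q$ is a connected set of at least two points inside $X \setminus \{p\}$, contradicting total disconnectedness there. You could even streamline slightly, since the uniqueness of the dispersion point is an immediate corollary of your second part (a totally disconnected card with at least two points cannot be connected), but as it stands the proof is complete and at the right level of generality for how the lemma is used in Theorem \ref{mythm10}.
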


\begin{theorem}
\label{mythm10}
Let $\cardinality{X} \geq 3$. Then total disconnectedness is reconstructible, i.e. $X$ is totally disconnected if and only if every card is totally disconnected.
\end{theorem}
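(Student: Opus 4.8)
The plan is to prove the equivalence by treating the two directions separately, the forward one being immediate and the reverse one splitting according to whether $X$ is connected.

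\emph{Forward direction.} Total disconnectedness is hereditary: a connected subset of a subspace $S \subseteq X$ is a connected subset of $X$, so if $X$ is totally disconnected then so is every $S$, and in particular every card $X \setminus \singleton{x}$.

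\emph{Reverse direction.} I would argue by contraposition: assume $X$ is not totally disconnected and exhibit a card which is not totally disconnected either. Fix a connected component $C \subseteq X$ with $\cardinality{C} \geq 2$. If $C \neq X$, choose $z \in X \setminus C$ and set $Y = X \setminus \singleton{z}$. Since $C \subseteq Y \subseteq X$, the subspace topology $C$ inherits from $Y$ coincides with the one it inherits from $X$ (transitivity of the subspace topology), so $C$ is a connected subset of $Y$ of size at least $2$; hence $Y$ is not totally disconnected, as required. It remains to treat the case $C = X$, i.e.\ $X$ connected (with $\cardinality{X} \geq 3$). Here Lemma \ref{mylem3} does the work: assume for contradiction that every card of $X$ is totally disconnected, pick distinct $x_1,x_2 \in X$, and note that since the card $X \setminus \singleton{x_1}$ is totally disconnected, Kline's lemma forces every other card — in particular $X \setminus \singleton{x_2}$ — to be connected. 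But $X \setminus \singleton{x_2}$ has at least $\cardinality{X}-1 \geq 2$ points, and a connected space with at least two points is never totally disconnected; this contradicts the assumption. Hence some card of $X$ fails total disconnectedness, completing the contrapositive.

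The only non-routine ingredient is the connected case, and there one genuinely cannot dispense with Kline's lemma: a connected space with a dispersion point (e.g.\ the Knaster--Kuratowski fan) does have a totally disconnected card, so the whole argument hinges on the impossibility of \emph{all} cards of a connected $X$ (with $\cardinality{X}\geq 3$) being totally disconnected — which is precisely the ``at most one dispersion point'' content of Lemma \ref{mylem3}. Everything else is bookkeeping with subspace topologies.
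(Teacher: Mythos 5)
Your proof is correct, and it rests on the same two essential ingredients as the paper's — hereditariness of total disconnectedness for the forward direction and Kline's Lemma \ref{mylem3} for the genuinely connected case — but it organises the contrapositive differently and more economically. The paper splits according to the cardinality of the components of $X$: a component $A$ with $\cardinality{A} \geq 3$ is handled by applying Kline's lemma to $A$ itself (at most one dispersion point, so some $a \in A$ leaves $A \setminus \singleton{a}$, and hence the card $X \setminus \singleton{a}$, not totally disconnected), and the residual case in which every component has at most two points is then settled by a fairly laborious discussion of singleton and two-point components. You instead split on whether a nontrivial component $C$ is proper or equals $X$: if $C \subsetneq X$, deleting a point outside $C$ leaves $C$ connected in the card by transitivity of the subspace topology, which disposes in one line of everything the paper's small-component analysis was doing; Kline's lemma is then invoked only where it is genuinely needed, namely when $X$ itself is connected, and your use of the ``moreover'' clause there is sound, since $X \setminus \singleton{x_2}$ would otherwise be simultaneously connected, of size at least $2$, and totally disconnected. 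The paper's uniform appeal to Kline for any component of size $\geq 3$ buys a treatment that does not distinguish $A = X$ from $A \subsetneq X$, but your decomposition yields the shorter and arguably cleaner argument, and your closing remark is accurate: the only place where something beyond subspace bookkeeping is required is the connected case, which is exactly the content of Lemma \ref{mylem3}.
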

\begin{proof}
The direct implication follows from the fact that total disconnectedness is hereditary. For the converse we prove the contraposition. Suppose that $X$ is not totally disconnected. If one component $A \subset X$ has $\cardinality{A} \geq 3$, we can apply Lemma \ref{mylem3} and see that for some $a \in A$, the card corresponding to $X \setminus \singleton{a}$ is not totally disconnected. As a result we may assume that for all components $A_s \subset X$ we have $\cardinality{A_s} \leq 2$ and further that there is at least one component $A \subset X$ with $\cardinality{A}=2$. Suppose there exists a component $B=\singleton{x}$. Then $\singleton{x}$ is a closed set. We claim that $A$ is a component of $Y=X \setminus \singleton{x}$. Otherwise, there exist open disjoint sets $U,V \subset Y$ disconnecting $A$. But since $Y$ was an open subspace, $U$ and $V$ disconnect $A$ in $X$, a contradiction. This leaves us with the possibility that all components $A_s \subset X$ consist of two points. Note that by assumption on the size of $X$, there are at least two components $A_1=\Set{x_1,y_1}$ and $A_2= \Set{x_2,y_2}$. If $A_1$ was not connected in $X \setminus \singleton{x_2} $ then it follows that $\singleton{x_2}$ is open in $X$. Similarly, we have that $\singleton{y_2}$ is open in $X$. Thus, $A_2$ is disconnected. This contradiction proves the claim.
\end{proof}

Although we have seen that in general connectedness is not reconstructible and in fact may not be inherited by any of the cards, traces of connectedness remain in the cards: removing a point will, with few exceptions, not leave behind a very disconnected space. We can use a precise version of this together with the fact that connected Tychonoff spaces have cardinality at least continuum to reconstruct connectedness in certain circumstances.

We note that this is the only proof of this paper which needs the deck to be a multiset instead of a set. It is also the only property which we have shown to be reconstructible without any card having it.

The following is a slight generalisation of a theorem of Whyburn about compact connected metrizable spaces in \cite{Whyburn}. The proof is a modern, slightly simplified and adapted version of Whyburn's original proof.
\begin{theorem}\label{thm:Whyburn}
If $X$ is a $T_1$ connected separable space then for all but countably many points $x$ of $X$, the subspace $X \setminus \Set{x}$ has at most two components.
\end{theorem}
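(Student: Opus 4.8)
The plan is to argue by contradiction: suppose there are uncountably many ``bad'' points $x$ for which $X \setminus \Set{x}$ has at least three components. The key structural fact is that each such bad point $x$ witnesses a genuine ``branching'' of $X$, and separability will be used to show that uncountably many such branchings cannot coexist. Concretely, for each bad point $x$ fix a partition of $X \setminus \Set{x}$ into three nonempty relatively open (hence, since $\Set{x}$ is closed in the $T_1$ space $X$, also open-in-$X$) pieces $U_x^1, U_x^2, U_x^3$; actually it is cleaner to group them as two complementary open sets $A_x$ and $B_x = X \setminus (A_x \cup \Set{x})$ where $A_x$ is a union of at least one component and $B_x$ of at least two, and to additionally record that $B_x$ itself splits as $B_x^1 \sqcup B_x^2$ into two nonempty open sets. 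The point $x$ lies in $\closure{A_x} \cap \closure{B_x^1} \cap \closure{B_x^2}$ — indeed if $x$ were not in the closure of one of these pieces, that piece would be clopen in $X$, contradicting connectedness of $X$; so $x$ is a triple limit point of three disjoint open sets.

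Next I would exploit separability. Fix a countable dense set $D \subseteq X$. For a bad point $x$, each of the three disjoint open sets $A_x, B_x^1, B_x^2$ meets $D$, so we may choose $a_x \in A_x \cap D$, $b_x \in B_x^1 \cap D$, $c_x \in B_x^2 \cap D$, giving an (unordered) triple $\Set{a_x,b_x,c_x}$ of distinct points of $D$. Since $D$ is countable, there are only countably many such triples, so if there were uncountably many bad points, uncountably many of them — say a set $S$ — would be assigned the \emph{same} triple $\Set{a,b,c}$. Now the contradiction should come from the fact that the points $a,b,c$ are separated in $X \setminus \Set{x}$ in ``incompatible'' ways for different $x \in S$: for $x \in S$ the removal of the single point $x$ disconnects $X$ into pieces separating $a$ from $b$ from $c$, so $x$ must lie on ``every path'' (in a suitable sense) joining these three points. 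The engine here is essentially the same separation argument underlying Kline's Lemma \ref{mylem3}: if $x \ne x'$ are both in $S$, then in $X \setminus \Set{x}$ the point $x'$ lies in exactly one of the three open pieces $A_x, B_x^1, B_x^2$ (it is in one of them, being different from $a, b, c$ which are spread across all three, we can relabel so that $x' \in A_x$), and then one checks that the two pieces $B_x^1, B_x^2$ together with their closures behave coherently across removals. I expect the clean way to finish is: for $x \in S$, at least two of $a,b,c$ — say $b,c$ — lie in components of $X \setminus \Set{x}$ not containing the third, and the set of ``separators of $b$ and $c$'' (points whose removal puts $b$ and $c$ in different components) is the relevant object; one shows this separator set, intersected with a fixed card, is forced to be small.

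Here is the step I expect to be the main obstacle and how I would handle it. Showing that uncountably many single-point separators of a fixed pair $\Set{b,c}$ cannot all lie inside one card requires the core topological input of Whyburn's original argument, which I do not get for free from the earlier results in this paper. The idea: if $x_1, x_2, \dots$ are distinct points each separating $b$ from $c$ in $X$, then removing $x_n$ gives open $U_n \ni b$ and $V_n \ni c$ with $U_n \cup V_n = X \setminus \Set{x_n}$; one argues that the $x_n$ can be linearly ordered ``between $b$ and $c$'' (this is where connectedness and the separation property $\Set{x_n}$ being a cut point are used — if $x_m, x_n$ both separate $b$ from $c$ then $x_m$ lies in the $b$-side or $c$-side of $x_n$, yielding a total order), and then no uncountable linear order embeds into $X$ in a way compatible with the card structure, because passing to a card $X \setminus \Set{p}$ for $p$ not among the $x_n$ removes at most one ``position'' while separability bounds the order's density character. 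I would make this precise by choosing, between consecutive separators in the order, a point of the countable dense set $D$, forcing the order of separators of any fixed pair to be countable — hence the set of all bad points, being covered by countably many such separator-sets (one per triple from $D$), is countable, the desired contradiction.
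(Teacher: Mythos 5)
Your setup is exactly the first half of the paper's proof: let $E$ be the set of bad points, fix a countable dense $D$, note that for each bad $x$ the three disjoint pieces of $X \setminus \Set{x}$ are open in $X$ (and have $x$ in their closures, by connectedness of $X$), pick a point of $D$ in each piece, and apply the pigeonhole principle to the resulting triples. The divergence — and the gap — is in how you finish. The paper needs only \emph{two} distinct bad points $x \neq y$ carrying the same triple, and the contradiction comes from one key fact you never establish: if $C$ is clopen in $X \setminus \Set{x}$, then $C \cup \Set{x}$ is connected (and closed) in $X$. Granting this, each pairwise union $C_{x,i} \cup C_{x,j} \cup \Set{x}$ is a connected set meeting two distinct clopen pieces of $X \setminus \Set{y}$, hence must contain $y$; intersecting over the three pairs forces $y = x$, a contradiction. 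You gesture at this step ("essentially the same separation argument underlying Kline's lemma", "one checks that the pieces behave coherently across removals") but explicitly defer it as the main obstacle, so the core of the argument is missing.

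Worse, the concrete route you propose for closing the gap is false. You want to show that the set of points separating a fixed pair $\Set{b,c}$ is countable, by choosing a point of $D$ "between consecutive separators". But in $\R$ (or $[0,1]$), a connected separable $T_1$ space, every point of the open interval between $0$ and $1$ separates $0$ from $1$: the separator set of a fixed pair can be uncountable, and the separation order is dense-in-itself, so there are no consecutive separators to interpolate between. Hence the covering argument ("bad points are covered by countably many separator sets, each countable") collapses. What is actually true, and what the pigeonhole requires, is the much stronger statement that at most one point can be assigned a given triple from $D$ — precisely what the connectedness of $C \cup \Set{x}$ delivers. As written, the proposal does not prove the theorem.
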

\begin{proof}
Let $E$ be the subset of $X$ consisting of all points $x$ such that the corresponding card $Y_x=X \setminus \singleton{x}$ has at least three components. Assume for a contradiction that $E$ is uncountable.

Let $D$ be a countable dense subset of $X$. For each point $x$ in $E$, there is a partition of $Y_x$ into disjoint non-empty $Y_x$-clopen sets $C_{x,1}$, $C_{x,2}$ and $C_{x,3}$. and points $d_{x,i} \in D \intersect C_{x,i}$. Since $X$ is $T_1$ and connected, every $C_{x,i}$ is $X$-open, and its closure in $X$ equals $C_{x,i} \union \singleton{x}$, a connected set. In particular, we may pick points $d_{x,i} \in C_{x,i} \cap D$.

By the pigeon-hole principle, there must be distinct points $x$ and $y$ in $E$ such that $\Set{d_{x,1},d_{x,2},d_{x,3}} = \Set{d_{y,1},d_{y,2},d_{y,3}}$. Without loss of generality, we may assume $d_{x,i}=d_{y,i}$, and hence that $C_{x,i}$ intersects $C_{y,i}$ for all $i$. It follows that the connected set $C_{x,1} \union C_{x,2} \union \singleton{x}$ must contain $y$, since a subset of $Y_y$ intersecting both $C_{y,1}$ and $C_{y,2}$ must be disconnected.

But by a symmetric argument, also $C_{x,1} \union C_{x,3} \union \singleton{x}$ and $C_{x,2} \union C_{x,3} \union \singleton{x}$ have to contain $y$, from which we conclude that $x=y$, a contradiction.
\end{proof}

\begin{theorem}\label{thm:SeparableConnectedness}
Being a Tychonoff, separable and connected space is weakly reconstructible.
\end{theorem}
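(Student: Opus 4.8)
The plan is to show that if $\multideck{X} = \multideck{Z}$ and $X$ is Tychonoff, separable and connected, then $Z$ is Tychonoff, separable and connected. Tychonoffness follows from Theorem~\ref{mythm8} (the $T_{3\frac12}$ axiom is reconstructible for spaces with at least three points, and a connected Tychonoff space is infinite), and separability follows from Theorem~\ref{mythm7}. So the entire content is reconstructing connectedness, and this is where the multi-deck is essential. Note first that $X$ has no isolated points: an isolated point of a connected space with more than one point is impossible, so every card $X \setminus \singleton{x}$ is a dense-in-itself separable Tychonoff space, and in particular $\cardinality{X} \geq \mathfrak{c}$, so $X$ is uncountable.

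Next I would apply Theorem~\ref{thm:Whyburn}: since $X$ is $T_1$, connected and separable, all but countably many points $x$ of $X$ have the property that $X \setminus \singleton{x}$ has at most two components. Since $X$ is uncountable, this means that for uncountably many $x$ the card $X \setminus \singleton{x}$ has one or two components; and since $X$ has no isolated points and is connected, no card can be the disjoint sum of two copies — wait, it can have two components but they will be non-clopen-separable in a strong sense. The key combinatorial observation is this: a card $Y \in \multideck{X}$ with at most two components that appears with \emph{uncountable multiplicity} cannot be realised as a card of a disconnected space in too many ways. More precisely, I would argue by contradiction: suppose $Z$ is a reconstruction of $X$ that is disconnected, say $Z = Z_1 \disjointSum Z_2$ with both parts non-empty and open. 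Removing a point from $Z$ leaves $(Z_1 \setminus \singleton{z}) \disjointSum Z_2$ (or symmetrically), so every card of $Z$ has at least as many components as $Z$ — in fact the number of components of a card of $Z$ is at least (number of components of $Z$) and at most (that number plus the maximum number of components a card of a single $Z_i$ can gain). Since every card of $Z$ equals some card of $X$ which has at most two components for all but countably many choices, and $Z$ has at least two components, we deduce that $Z$ has exactly two components $Z_1, Z_2$, and that for all but countably many $z \in Z$ the card $Z \setminus \singleton{z}$ has exactly two components, i.e. removing $z$ from its component $Z_i$ leaves that component connected.

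Therefore $Z_1$ and $Z_2$ are each connected separable Tychonoff spaces in which all but countably many points are non-cut-points in the weak sense that their deletion leaves something connected; in particular each $Z_i$ is connected with more than one point, hence uncountable, hence both $Z_1$ and $Z_2$ are uncountable. Now count multiplicities: the multi-deck of $Z$ consists of the cards $(Z_1 \setminus \singleton{z}) \disjointSum Z_2$ for $z \in Z_1$ together with $Z_1 \disjointSum (Z_2 \setminus \singleton{z})$ for $z \in Z_2$. Each such card is disconnected (has $\geq 2$ components). But $X$ is connected with no isolated points, so $X$ has uncountably many points $x$ whose card $X \setminus \singleton{x}$ has at most two components — and I claim at least one such card is actually connected, or more carefully, I need to exhibit a mismatch. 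The cleanest route: a card of $X$ with exactly two components must have the form $A \disjointSum B$ where $\closure{A} = A \union \singleton{x}$ and $\closure{B} = B \union \singleton{x}$ inside $X$ glue at the single deleted point; such a card, when it reappears as a card $(Z_1 \setminus \singleton{z}) \disjointSum Z_2$ of $Z$, forces a homeomorphism matching up components, so $Z_2$ is homeomorphic to one of $A$ or $B$, and $Z_2 \union \{*\}$ (one-point extension) embeds as a closed connected subset of $X$. Running this for two different points $z, z'$ of $Z_1$ yields two homeomorphic copies; the multiplicity bookkeeping then says: the number of cards of $X$ homeomorphic to $Z_1 \disjointSum (Z_2 \setminus \singleton{z_0})$ (for a fixed $z_0 \in Z_2$) must equal the number of $z \in Z_2$ giving that card, which is uncountable since $Z_2$ is uncountable and "most" of its points give the same two-component card up to homeomorphism only if $Z_2$ has few homeomorphism types of point-deleted subspaces — and here is the contradiction with $X$: all these uncountably many cards of $X$ are disconnected two-component spaces containing a "large" clopen piece homeomorphic to $Z_1$, yet $X$ is connected and separable, so $Z_1$ (being clopen in uncountably many cards of $X$, each card being $X$ minus a point) would have to be clopen in $X$ minus a point for uncountably many points — but a non-trivial clopen subset of $X \setminus \singleton{x}$ has closure $X$ by connectedness of $X$, so $Z_1$ is dense in $X$; doing this for $Z_2$ too, and noting $Z_1, Z_2$ are complementary clopen in the card, forces a separation of $X$ itself, contradicting connectedness.

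The main obstacle, and the step requiring the most care, is this last bookkeeping argument: I must rule out that $Z = Z_1 \disjointSum Z_2$ is disconnected by playing off the \emph{multiplicities} of the two-component cards against the fact that $X$ is connected. The subtlety is that a disconnected reconstruction $Z$ could in principle mimic the deck of connected $X$ if the "merging" of components exactly cancelled, but the multi-deck refinement forbids this: $X$ has uncountably many one-component (connected) cards? No — $X$ need not have any connected card (that is the whole point, cf.\ the hedgehog examples). So the real argument must be: $X$ has uncountably many cards with at most two components, and I must show that whichever of the $X \setminus \singleton{x}$ are connected versus two-component, the resulting multiset is incompatible with the multiset arising from $Z_1 \disjointSum Z_2$, using that in the latter \emph{every} card is disconnected with a fixed "trailing" component, while in the former the components of a two-component card both accumulate at the single removed point and hence neither is clopen in $X$. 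Pinning down that a card of $X$ can never be homeomorphic to $Z_1 \disjointSum Z_2 \disjointSum(\text{point removed})$ because the latter has a clopen component ($Z_1$ or $Z_2 \setminus \singleton z$, whichever is untouched) that does \emph{not} accumulate at a common point with the rest — whereas in $X \setminus \singleton{x}$ the two components' closures in $X$ both contain $x$ — should give a direct topological contradiction without any multiplicity counting at all, and that is the argument I would ultimately try to write cleanly.
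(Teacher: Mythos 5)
Your reductions at the start are sound and parallel the paper's: Tychonoffness and separability are reconstructible, $i(X)=0$ forces every card to be dense-in-itself, Theorem \ref{thm:Whyburn} bounds by countably many the points of $X$ whose card has at least three components, and the multi-deck then limits how disconnected a reconstruction $Z$ can be (with a little extra care, using $i(Z)=0$ and the fact that a singleton component of $Z$ would have to be isolated because it sits inside a clopen partition, one does get $Z=Z_1\oplus Z_2$ with both pieces connected, uncountable, and with all but countably many points of each $Z_i$ non-cut-points of $Z_i$). The genuine gap is the decisive step, which you yourself flag as the delicate one and then resolve incorrectly. Your proposed ``direct topological contradiction without any multiplicity counting'' rests on a false premise: whether the untouched clopen piece of a card ``accumulates at the deleted point'' is not a property of the abstract card, since the card only remembers the homeomorphism type of $X\setminus\singleton{x}$, not how its components glue back at $x$. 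Concretely, $[0,1]\setminus\singleton{1/2}\cong [0,1)\oplus[0,1)$ is simultaneously a card of the connected space $[0,1]$ and a card of the disconnected space $[0,1)\oplus[0,1]$ (delete the right endpoint of the closed summand), where one summand is left untouched. So a card of a connected $X$ can perfectly well be homeomorphic to a card of $Z_1\oplus Z_2$ with one summand intact, and no card-by-card homeomorphism obstruction of the kind you describe exists; indeed, if it did, the multi-deck would be unnecessary, whereas this theorem is precisely the one place the paper needs multiplicities. Your alternative ``multiplicity bookkeeping'' paragraph never reaches a contradiction either: it tracks how often certain two-component cards occur, but nothing in it establishes any property of the components of the cards of $X$ that could clash with the components $Z_1,Z_2$.

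What is missing is the paper's key Claim: if $X$ is connected, separable, Tychonoff, without isolated points and without a connected card, then there is a card $X\setminus\singleton{x_0}$ with exactly two components $C_1,C_2$ \emph{each of which has uncountably many cut-points}. This is proved by taking $x_0$ outside the countable exceptional set $E$ of Theorem \ref{thm:Whyburn} and showing that every point of $C_i\setminus E$ is a cut-point of $C_i$ (using that the closures in $X$ of $C_1$, $C_2$ and of $C_i\setminus\singleton{y}$ all pick up the deleted point and stay connected, so a non-cut-point $y$ would force one component of $X\setminus\singleton{y}$ to be empty). With this Claim, the contradiction is immediate and genuinely uses the multi-deck: transporting this card into a disconnected reconstruction $Z$ via $Z\setminus\singleton{z_0}\cong X\setminus\singleton{x_0}$, one of the two pieces, say $F_2$, satisfies $z_0\notin\closure{F_2}$ (else $Z$ would be connected) and is therefore clopen in $Z$; deleting any of its uncountably many cut-points splits $Z$ into at least three nonempty open pieces, so the multi-deck of $Z$ has uncountably many entries with at least three components, while by Theorem \ref{thm:Whyburn} the multi-deck of $X$ has only countably many. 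Nothing in your proposal proves (or even states) that the components of the relevant cards of $X$ have many cut-points, and that is the heart of the proof; your analysis of $Z$ (that $Z_2$ has only countably many cut-points) could indeed be played off against such a statement, but the statement itself still has to be established.
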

\begin{proof}
First note that being a Tychonoff separable space is reconstructible, and hence also weakly reconstructible. Next, if $X$ has a connected card, then every reconstruction is connected, since $i(X) = 0$ is reconstructible in $T_1$-spaces by Theorem \ref{mythm18}. Hence we may assume that $X$ is a cut-space, i.e.\ that every card is disconnected.
We now claim:

\textit{Claim:} If $X$ is a connected, separable, Tychonoff space without isolated points and without a connected card, then there is a card $X \setminus \Set{x_0}$ having two components $C_1$ and $C_2$ such that both have uncountably many cut-points.

\textit{Proof of claim:} Let $E$ again denote the set of points $x$ such that the corresponding card $Y_x=X \setminus \singleton{x}$ has at least three components. Then $E$ is countable by Theorem \ref{thm:Whyburn}. As $X$ is connected Tychonoff, so uncountable, we can find some $x_0 \not\in E$ such that $Y_{x_0}$ has indeed two components $C_{1}$ and $C_{2}$. Both components are non-trivial connected Tychonoff spaces, so uncountable.

To establish the claim, it suffices to show that all points in $C_{1} \setminus E$ are cut-points of $C_{1}$. Assume for a contradiction that $y \in C_{1} \setminus E$ is not a cut-point of $C_{1}$. As before, the card $Y_y$ has two components $D_{1}$ and $D_{2}$. Since $C_{2}$ is a connected subset of $Y_y$, we may assume without loss of generality that $C_{2} \subset D_{2}$. It follows that closure of ${C_{2}}$ in $X$, equal to $C_{2} \cup \singleton{x_0}$, is also a connected subset of $Y_y$, and hence $C_{2} \cup \singleton{x_0} \subset D_{2}$.

However, the closure of $C_{1} \setminus \Set{y}$ in $X$, a connected set by assumption on $y$, also contains $x_0$. Since it intersects  $D_{2}$ in $x_0$, it must be contained in $D_{2}$. Thus $D_{1}$ is empty, a contradiction. This establishes the claim.

To conclude the proof, let $X \setminus \Set{x_0}$ be the card mentioned in the claim and assume $Z$ is a disconnected reconstruction of $X$. Find $z_0 \in Z$ such that $Z \setminus \singleton{z_0} \cong X \setminus \singleton{x_0}$. It follows that $Z \setminus \singleton{z_0}$ has two components $F_1$ and $F_2$ that correspond to $C_1$ and $C_2$ of $X \setminus \singleton{x_0}$.

Note that without loss of generality, $z_0 \notin \closure{F_2}$, as otherwise $Z$ would be connected. Hence, $F_2$ and $F_1 \cup \singleton{z}$ are clopen subsets of $Z$. But $F_2$ has a uncountably many cut-points $y$ such that $F_2 \setminus \Set{y}$ has  components $C_{y,1}$ and $C_{y,2}$. Thus, for all those $y$, the card $Z \setminus \singleton{y}$ has a partition into three non-empty open sets, namely $F_1 \cup \singleton{z}$, $C_{y,1}$ and $C_{y,2}$. We conclude that the multi-deck of $Z$ contains uncountably many cards having at least three components. Therefore, by Theorem \ref{thm:Whyburn}, $Z$ cannot be a reconstruction of $X$.
\end{proof}

It is easy to see that for spaces having only one card, the notions of reconstruction and weak reconstruction coincide. Hence, we obtain the following corollary. 

\begin{corollary}
\label{mycor1}
Every reconstruction of the reals $\R$ is connected. \qed
\end{corollary}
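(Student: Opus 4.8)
The plan is to derive Corollary~\ref{mycor1} directly from Theorem~\ref{thm:SeparableConnectedness}, using only the fact that the deck $\singletonDeletion{\R} = \Set{\R \oplus \R}$ is a singleton set together with the classical properties of $\R$. First I would record that $\R$ is a Tychonoff, separable, connected space, so by Theorem~\ref{thm:SeparableConnectedness} the property ``being Tychonoff, separable and connected'' is weakly reconstructible, i.e.\ preserved under agreement of multi-decks. The remaining issue is to bridge the gap between weak reconstruction (multi-deck) and ordinary reconstruction (deck), and here I would invoke the remark made just before the corollary: for a space all of whose cards are homeomorphic there is only one card, so the multi-deck carries no more information than the deck, and the two notions of reconstruction coincide.

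Concretely, the key steps are: (1) observe $\singletonDeletion{\R} = \Set{\R \oplus \R}$ has exactly one element, hence any space $Z$ with $\singletonDeletion{Z} = \singletonDeletion{\R}$ in fact has $\multideck{Z}$ equal to $\multideck{\R}$ as multi-sets as well, because a deck with a single homeomorphism type forces every card of $Z$ to be that type, and the multi-deck is then determined up to the (irrelevant) multiplicity; (2) apply Theorem~\ref{thm:SeparableConnectedness} to conclude that $Z$ is Tychonoff, separable and connected; (3) in particular $Z$ is connected, which is exactly the statement of the corollary. One should be slightly careful in step~(1): strictly speaking the multiplicities of $\multideck{\R}$ and $\multideck{Z}$ need not agree as cardinals, but the proof of Theorem~\ref{thm:SeparableConnectedness} only ever uses that there are \emph{uncountably many} cards of a certain kind, and if $Z$ is a reconstruction of $\R$ from the deck then $\cardinality{Z} = \cardinality{\R} = \mathfrak{c}$ by reconstructibility of cardinality, so $\multideck{Z}$ consists of $\mathfrak{c}$-many copies of $\R \oplus \R$; thus the cardinality bookkeeping in the proof of Theorem~\ref{thm:SeparableConnectedness} goes through unchanged.

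The main (and only) obstacle is purely bookkeeping: making sure the passage from the set-deck to the multi-deck is legitimate in this instance. This is harmless precisely because $\R$ has a unique card, so there is genuinely no information lost. No further topological input is needed beyond Theorem~\ref{thm:SeparableConnectedness} and the reconstructibility of cardinality; the corollary is essentially a specialization of the theorem to a space with a one-element deck, and the ``\qed'' in the statement already signals that the authors regard it as immediate.
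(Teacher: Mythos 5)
Your proof is correct and takes essentially the same route as the paper, which likewise deduces the corollary from Theorem~\ref{thm:SeparableConnectedness} together with the observation that for a space having only one card the notions of reconstruction and weak reconstruction coincide. Your multiplicity check (both multi-decks consist of $\mathfrak{c}$ copies of $\R \oplus \R$, since every card has exactly one point fewer than the space) is precisely the justification implicit in that observation.
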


All of our counterexamples of non-reconstructible spaces in Section \ref{section:counterexamples} rely on the failure of connectedness in that a possible different reconstruction has many components. Given the previous theorem and also Theorem \ref{thm:allReconstructionsCompact} it is therefore interesting to ask:
\begin{question}
	If $X$ is connected and every reconstruction of $X$ is connected, is $X$ reconstructible?

	If not, are separable connected Tychonoff $X$ reconstructible?

	If not, are compact connected metrizable spaces reconstructible?
\end{question}

\providecommand{\bysame}{\leavevmode\hbox to3em{\hrulefill}\thinspace}
\providecommand{\MR}{\relax\ifhmode\unskip\space\fi MR }
\providecommand{\MRhref}[2]{%
  \href{http://www.ams.org/mathscinet-getitem?mr=#1}{#2}
}
\providecommand{\href}[2]{#2}


\begin{thebibliography}{10}

\bibitem{Alexandroff}
P.\ Alexandroff, \"{U}ber nulldimensionale {P}unktmengen, Math. Ann.
  98 (1928) (1) 89--106. \MR{1512393}

\bibitem{Bondy}
J.\ A.\ Bondy, A graph reconstructor's manual, Surveys in combinatorics,
  1991 ({G}uildford, 1991), London Math. Soc. Lecture Note Ser., vol. 166,
  Cambridge Univ. Press, Cambridge, 1991, pp.~221--252. \MR{1161466
  (93e:05071)}

\bibitem{Chandler}
R.\ E.\ Chandler, Hausdorff compactifications, Marcel Dekker Inc.,
  New York, 1976, Lecture Notes in Pure and Applied Mathematics, Vol. 23.
  \MR{0515002 (58 \#24191)}

\bibitem{Engelking}
R.\ Engelking, General topology, second ed., Sigma Series in Pure
  Mathematics, vol.~6, Heldermann Verlag, Berlin, 1989, Translated from the
  Polish by the author. \MR{1039321 (91c:54001)}

\bibitem{Glicksberg}
I.\ Glicksberg, Stone-\v {C}ech compactifications of products, Trans.
  Amer. Math. Soc. 90 (1959) 369--382. \MR{0105667 (21 \#4405)}

\bibitem{Juhasz}
I.\ Juh{\'a}sz, L.\ Soukup and Z.\ Szentmikl{\'o}ssy,
  Resolvability and monotone normality, Israel J. Math. 166
  (2008) 1--16. \MR{2430422 (2009d:54007)}

\bibitem{Kline}
J.\ R\. Kline, A theorem concerning connected point sets, Fund.
  Math. 3 (1) (1922) 238--239.

\bibitem{Magill}
K.~D. Magill, Jr., {$N$}-point compactifications, Amer. Math. Monthly
  {72} (1965) 1075--1081. \MR{0185572 (32 \#3036)}
  
  \bibitem{Mill01}
J.\ van Mill, The infinite-dimensional topology of function spaces,
  North-Holland Mathematical Library, vol.~64, North-Holland Publishing Co.,
  Amsterdam, 2001. \MR{1851014 (2002h:57031)}
  
 \bibitem{diversity}
 J.\ Norden, S.\ Purisch, M.\ Rajagopalan, Compact spaces of diversity two, Top.\ Appl.\ 70 (1996) 1--24.
 
 \bibitem{Sierpinski}
W.~Sierpinski, Sur une propri\'et\'e topologique des ensembles
  d\'enombrables dense en soi, Fund. Math. 1 (1920) 11--16.

\bibitem{Ward}
A.~J. Ward, The topological characterisation of an open linear interval,
  Proc. London Math. Soc. 41 (1) (1936) 191--198.

\bibitem{Watson}
S.\ Watson, The construction of topological spaces: planks and
  resolutions, in: M.\ Hu\v{s}ek, J.\ van Mill (Eds.), Recent progress in general topology ({P}rague, 1991),
  North-Holland, Amsterdam, 1992, pp.~673--757. \MR{1229141}

\bibitem{Whyburn}
G.\ T.\ Whyburn, Concerning the cut points of continua, Trans. Amer.
  Math. Soc. 30 (3) (1928) 597--609. \MR{1501448}

\end{thebibliography}
\end{document}